\theoremstyle{plain}
\newtheorem{lem}{Lemma}[section]
\newtheorem{cor}[lem]{Corollary}
\newtheorem{prop}[lem]{Proposition}
\newtheorem{thm}[lem]{Theorem}
\theoremstyle{definition}
\newtheorem{ex}[lem]{Example}
\newtheorem{rem}[lem]{Remark}
\newtheorem{dfn}[lem]{Definition}
\newcommand{\Z}{\mathbb{Z}}               
\newcommand{\Q}{\mathbb{Q}}              
\newcommand{\al}{\alpha}    
\newcommand{\be}{\beta}
\newcommand{\ga}{\gamma}
\newcommand{\de}{\delta}  
\newcommand{\ep}{\epsilon}
\newcommand{\la}{\lambda}   
\newcommand{\om}{\omega}
\newcommand{\La}{\Lambda}  
\newcommand{\Th}{\Theta}      
\newcommand{\De}{\Delta}
\newcommand{\impl}{\;\Longrightarrow\;}   
\newcommand{\eqst}{\;\Longleftrightarrow\;}   
\newcommand{\ra}{\longrightarrow}             
\newcommand{\id}{\mathrm{id}}      
\newcommand{\trg}{\triangleleft}       
\newcommand{\Ln}{\La_{\emptyset}}      
\newcommand{\edg}{{\;{-}\!\!{-}\;}}             
\newcommand{\Gg}{\mathcal{G}}           
\newcommand{\SA}{\mathcal{Z}}     
\newcommand{\sgn}{\mathrm{sgn}}     
\newcommand{\ch}{\mathrm{ch}}      
\newcommand{\cch}{\mathfrak{ch}}    
\newcommand{\Lie}{\mathfrak{g}}   
\newcommand{\Crt}{\mathfrak{h}}    
\newcommand{\Hom}{\mathrm{Hom}}
\begin{document}

\title[A Riemann-Roch type theorem for moment graphs]{A Riemann-Roch type theorem for twisted fibrations of moment graphs}

\author[M.~Lanini]{Martina Lanini}
\address[Martina Lanini]{Dipartimento di Matematica, University of Rome Tor Vergata, Via della Ricerca Scientifica 1, 00133, Rome, Italy}
\email{lanini@mat.uniroma2.it}
\urladdr{https://sites.google.com/site/martinalanini5/home}

\author[K.~Zainoulline]{Kirill Zainoulline}
\address[Kirill Zainoulline]{Department of Mathematics and Statistics, University of Ottawa, 150 Louis-Pasteur, Ottawa, ON, K1N 6N5, Canada}
\email{kirill@uottawa.ca}
\urladdr{http://mysite.science.uottawa.ca/kzaynull/}

\subjclass[2010]{14F05, 14F43, 14M15}
\keywords{equivariant cohomology, Chern character, Riemann-Roch theorem, moment graph, structure algebra}

\begin{abstract}  In the present paper we extend the Riemann-Roch formalism to structure algebras of moment graphs.
We introduce and study the Chern character and pushforwards for twisted fibrations of moment graphs. 
We prove an analogue of the Riemann-Roch theorem for moment graphs. As an application, we obtain the Riemann-Roch type theorem for equivariant $K$-theory of some
Kac-Moody flag varieties.
\end{abstract}

\maketitle

\tableofcontents


\section{Introduction}
Moment graphs are combinatorial gadgets arising as labelled one-skeleta of torus actions on (non-necessarily smooth) varieties. In most of the cases they encode all the necessary data to describe equivariant generalized cohomology theories in different settings (see, for instance, \cite{CS74}, \cite{GKM}, \cite{HHH}, \cite{DLZ}). For example, the explicit combinatorial description which one obtains using localization techniques has allowed Tymoczko \cite{Ty08} to study the symmetric group \emph{dot-action} on Hessenberg varieties.

If the variety also admits a stratification which is compatible with the torus action, then the closure inclusion relation among strata induces an orientation on the edges of the corresponding moment graph. Braden and MacPherson \cite{BM01} showed that moment graphs arising in this way can be applied to determine the stalks of the equivariant intersection cohomology complexes. In this occasion they defined the notion of sheaves on moment graphs. Braden-MacPherson's result was further extended and developed by Fiebig and Williamson \cite{FW} to determine the stalks of indecomposable parity sheaves. Thanks to representation theoretical role of intersection cohomology and parity sheaves, the moment graph theory has become an important tool in modern representation theory (see the survey~\cite{Fi13} for some of the applications). Moreover,  it provides a realization of the category of Soergel bimodules associted with any Coxeter group (see \cite{Fi08}), as well as allowing to  categorify some properties of Kazhdan-Lusztig polynomials (see \cite{L12, L15}). 

While developing tools needed for the representation theoretical applications, Fiebig realized that it was possible to carry the theory of sheaves on moment graphs in an axiomatic way, so that several sheaf theoretical notions where adapted to the moment graph setting without any need of an actual geometry. This allows one, for example, to define the analogue of IC-sheaves for a moment graph associated to any Coxeter system (the so-called BMP-sheaves), leading Fiebig to the above-mentioned moment graph realization  of Soergel bimodules. On the other hand, pullbacks of moment graph morphisms between the corresponding categories of sheaves (see \cite{L12}) provide a fundamental tool to categorify equalities among Kazhdan-Lusztig polynomials.

In this paper, we aim to further develop moment graph analogues of classical topological/geometric tools. 
Namely, we intend to extend the general Riemann-Roch formalism to the moment graph settings. 
Observe that given a moment graph it is indeed possible to construct the corresponding additive and multiplicative structure algebras. In the case of a moment graph arising as a one-skeleton of a torus action on a nice enough variety, these coincide with the equivariant cohomology and K-theory, respectively. 
Given a moment graph morphism (see Definition \ref{Defn:MGmorph}), one would like to define pullbacks and pushfowards between the corresponding structure algebras. We therefore introduce the notion of (twisted) pullback and of pushfoward along a (twisted) fibration. For the reader familiar with flag varieties, the latter notion is the moment graph generalization of the fibration corresponding to the quotient morphism from the variety of complete flags to a variety of partial flags.
As a first consequence of our constructions, we are able to define a divided difference operator on the structure algebra of a moment graph having a special matching, and satisfying some extra assumption (see Definition \ref{ex:specialmatching}). Special matchings have played an important role  in proving special cases of  the invariance conjecture for Kazhdan-Lusztig polynomials (see, for example, \cite{B}), so that we believe that the notion of divided difference operator in this setting might be of some interest to algebraic combinatorialists.

In order to state and prove the Riemann-Roch type theorem in the moment graph setting, we introduce the truncated Chern character $\cch_i\colon \SA_m(\Gg) \to \SA_a(\Gg)$ between the structure algebras of the moment graph $\Gg$ by gluing the respective exponential maps over the fixed loci (see \S\ref{sec:chern}). Observe that this idea has been successfully applied before in different contexts, e.g.
in the settings of the equivariant Riemann-Roch theorem for equivariant $K$-theory (see \cite{EG00}), for bivariant operational $K$-theory (see \cite{AGP}), for cohomological operations on equivariant oriented cohomology of flag varieties (see \cite{Za20}).
We then show that $\cch$ commutes with characteristic maps and pullbacks. 
As for pushforwards, our main result (Theorem~\ref{thm:mainRR}) -- the analogue of the Riemann-Roch theorem for moment graphs -- 
says that the push-forward $\pi^\xi_\ast$ for a twisted fibration $\pi^\xi$ commutes with the Chern character up to a multiple by the respective Todd genus $td_i^\xi$ of $\Gg$ 
(another interesting geometric invariant of the moment graph). Namely, we prove that 
for any $z\in \SA_m(\Gg)$ there is the following Riemann-Roch type formula:
\[
\pi^\xi_\ast \big(\cch_i(z)\cdot td_i^\xi(\Gg)\big)=\cch_i(\pi^\xi_\ast(z)).
\] 
Applying our result to the case of Kac-Moody flag varieties, we obtain the respective equivariant Riemann-Roch theorem for fibrations of the type 
$G/B\rightarrow G/P$, where $G\supseteq P\supseteq B$ are a Kac-Moody group, its parabolic and Borel subgroups, respectively.

\vspace{3mm}

\paragraph{\it Organization of the paper}
In Section~\ref{sec:momgr}  we recall definitions and provide examples of moment graphs and of morphisms among them, we then discuss quotients of moment graphs. In Section~\ref{sec:stralg} we study structure algebras and their behaviour with respect to characteristic maps and filtrations; we introduce twisted pull-backs. Section~\ref{sec:pushpull} is dedicated to the construction of push-forwards maps induced by twisted fibrations of moment graphs (Proposition~\ref{prop:push}); we prove the projection formula (Corollary~\ref{cor:projform}) and produce analogues of push-pull operators on moment graphs. In Section~\ref{sec:chern} we introduce the Chern character between structure algebras of moment graphs (Proposition~\ref{thm:main}); we study its properties with respect to characteristic maps, pull-backs and forgetful maps (Lemma~\ref{lem:charm}, \ref{lem:forgt} and~\ref{lem:cpull}). We state and prove our main result (Theorem~\ref{thm:mainRR}). In Section~\ref{sec:KacM} we apply our theorem to Kac-Moody flag varieties. 

\paragraph{\it Acknowledgements} M.L. acknowledges the MIUR Excellence Department Project awarded to the Department of Mathematics, University of Rome Tor Vergata, CUP E83C18000100006, and the PRIN2017 CUPE84|19000480006. K.Z. was partially supported by the NSERC Discovery grant RGPIN-2015-04469, Canada.


\section{Moment graphs and their quotients}\label{sec:momgr}

In the present section we recall the definition and provide examples of moment graphs and morphisms among them. We introduce the notion of a monodromy of a moment graph in~\ref{dfn:gmond}. We then discuss quotients of moment graphs modulo certain equivalence relations. In particular, we study relations and quotients associated to special matchings in ~\ref{ex:specialmatching}.

\subsection{Definitions and examples}
Let $\La$ be a lattice (a free abelian group of finite rank). We denote by $\Ln$ the subset of non-zero elements of $\La$. We recall the definition of a moment graph on a lattice  $\La$ from \cite{L12}:

\begin{dfn}
The data $\Gg=\big((V,\le), l\colon E\to \Ln\big)$ is called a {\it moment graph} on the lattice $\La$ if
\begin{itemize}

\item[(MG1)] 
$V$ is a set of vertices together with a partial order `$\le$', i.e.\! we are given a poset $(V,\le)$.

\item[(MG2)] 
$E$ is a set of directed edges labelled by a nonzero element of $\La$ via the label function $l$, i.e.\! $E\subset V\times V$ with an edge $(v,w)\in E$ denoted by $v\to w$ and labelled by $l(v \to w)\in \Ln$.

\item[(MG3)] 
For any edge $v\to w\in E$, we have $v\le w$, $v\neq w$, i.e.\! direction of edges respects the partial order.
\end{itemize}
\end{dfn}

\begin{rem}
Observe that (MG2) and (MG3) imply that the graph does not have multiple edges or self-loops: (MG2) disallows several edges between the same two vertices in the same direction, and (MG3) disallows pairs of edges between two vertices in the opposite directions and self-loops. Also, (MG3) implies that $\Gg$ has no directed cycles. Observe also that the direction of edges in $\Gg$ is uniquely determined by the partial order `$\le$'.
\end{rem}

\begin{ex}\label{ex:Bruhat} 
Let $W$ be a {\it real finite reflection group} in the sense of \cite[I.1]{Hu90}. Let $\Phi$ be the associated root system together with a subset of simple roots $\Pi$ and the decomposition $\Phi=\Phi_+\amalg \Phi_-$ into positive and negative roots. Consider the usual Bruhat poset $(W,\le)$ of \cite[II.5.9]{Hu90}. The data 
\[
V:=W,\quad E:=\{w\to s_\al w\mid  w\le s_\al w,\, \al\in \Phi_+\}\quad \text{ and }\quad l(w\to s_\al w):=\al,
\]
where $s_\al$ is the reflection corresponding to the positive root $\al$, define a moment graph on the root lattice $\La_r=Span_\Z(\Phi)$ called the {\it Bruhat moment graph} and denoted $\Gg(W)$. 

Moreover, let $\Th$ be a subset of $\Pi$ and let $W_\Th$ be the (parabolic) subgroup generated by reflections corresponding to the roots from $\Th$. Let $W^\Th$ denote the subset of minimal coset representatives of $W/W_\Th$ (such representatives are unique). Consider the restricted Bruhat poset $(W^\Th,\le)$. The data
\[
V:=W^\Th,\quad E:=\{w\to \overline{s_\al w}\mid  w\le \overline{s_\al w},\, \al\in \Phi_+\}\quad \text{ and }\quad l(w\to \overline{s_\al w}):=\al,
\]
where $\overline{w}$ denotes the minimal coset representative of the coset $wW_\Th$, define a moment graph on the same root lattice $\La_r$ called the {\it parabolic Bruhat moment graph} and denoted $\Gg(W^\Th)$.

As examples one can also take full subgraphs of $\Gg(W^\Th)$ corresponding to Bruhat intervals $[v,w]$ on $W^\Th$ with vertices $\{u\in W^\Th \mid v \le u \le w\}$, or graphs corresponding to double cosets of $W$ (see \cite[\S4]{DLZ} for details).
\end{ex}

\subsection{Moment graph morphisms} 
Given an edge $x\to y$ or $y\to x$ we will use the notation $x\edg y$ if we are only considering the underlying edge without orientation.
We recall the definition of moment graph morphisms and isomorphisms. 

\begin{dfn}\label{Defn:MGmorph}(cf.~\cite[Definition~2.3]{L12})
A morphism between two moment graphs on $\La$\[
f\colon \big((V,\le), l\colon E\to \Ln\big) \ra \big((V',\le'), l'\colon E'\to \Ln \big)
\]
is given by a collection of maps $(f_V, \{f_{l,v}\}_{v\in V})$, where
\begin{itemize}
\item[(MR1)] 
$f_V\colon V \ra V'$ is a morphism of posets such that
\[
v \edg w\in E \impl f_V(v) \edg f_V(w)\in E' \;\text{ or }\; f_V(v)=f_V(w).
\]
\end{itemize}

Given an edge $v \edg w\in E$ such that $f_{V}(v)\neq f_{V}(w)$, we set 
\[
f_E(v \edg w):= f_V(v) \edg f_V(w).
\]

\begin{itemize}
\item[(MR2)] 
$f_{l,v}$ is a $\Z$-linear automorphism of $\La$ for each $v\in V$ such that \\
if $v\edg w \in E$ and $f_{V}(v)\neq f_{V}(w)$, then 
\item[(a)] 
$f_{l,v}(l(v \edg w))=\pm l'(f_E(v \edg w))$,
\item[(b)]  
$\pi \circ f_{l,v} =\pi \circ f_{l,w}$,
where $\pi$ is the canonical quotient map 
\[
\pi\colon \La \ra \La/l'(f_{E}(v \edg w))\Z.
\]
\end{itemize}
\end{dfn}

\begin{lem}(cf.~\cite[Lemma~3.6]{L15}) 
A morphism $f=(f_V, \{f_{l,v}\}_{v\in V})$ between two moment graphs $\Gg = \big((V,\le), l\colon E\to \Ln\big)$ and $\Gg' = \big((V',\le'), l'\colon E\to \Ln)$ on $\La$ is an isomorphism if and only if the following two conditions hold:
\begin{itemize}
\item[(ISO1)]$f_V$ is bijective,
\item[(ISO2)]for each $v'\to w'\in E'$ there exists exactly one $v\to w\in E$ such that $f_V(v) = v'$ 
and $f_V(w) = w'$.
\end{itemize}
\end{lem}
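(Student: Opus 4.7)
The plan is to prove both implications by constructing an explicit inverse morphism and verifying each axiom. For the forward direction, assume $f$ is an isomorphism with inverse $g=(g_V,\{g_{l,v'}\}_{v'\in V'})$. Then $g_V\circ f_V=\id_V$ and $f_V\circ g_V=\id_{V'}$, giving (ISO1). For (ISO2), take an edge $v'\edg w'\in E'$: axiom (MR1) applied to $g$ forces either $g_V(v')\edg g_V(w')\in E$ or $g_V(v')=g_V(w')$, and the latter is excluded by the injectivity of $g_V$. This produces a preimage, whose uniqueness is immediate from the injectivity of $f_V$.

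For the reverse direction, assume (ISO1) and (ISO2). I would define $g_V:=f_V^{-1}$ and, for each $v'\in V'$, set $g_{l,v'}:=(f_{l,g_V(v')})^{-1}$, which is a $\Z$-linear automorphism of $\La$. The edge part of (MR1) for $g$ is immediate from (ISO2), and compatibility with the partial order follows since the order on a moment graph is controlled by the edge structure. For (MR2)(a), take $v'\edg w'\in E'$ and let $v\edg w\in E$ be the unique preimage given by (ISO2). Then inverting the relation $f_{l,v}(l(v\edg w))=\pm l'(v'\edg w')$ from (MR2)(a) for $f$ immediately yields $g_{l,v'}(l'(v'\edg w'))=\pm l(g_E(v'\edg w'))$.

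The main obstacle is verifying (MR2)(b) for $g$. Write $a:=l(v\edg w)$ and $b:=l'(v'\edg w')$. Applying (MR2)(a) for $f$ at both endpoints of $v\edg w$ yields $f_{l,v}(a)=\pm b=\pm f_{l,w}(a)$, whence $f_{l,w}^{-1}(f_{l,v}(a))\in a\Z$. Next, (MR2)(b) for $f$ reads $f_{l,v}(x)-f_{l,w}(x)\in b\Z=f_{l,v}(a)\Z$ for all $x\in\La$. Specializing at $x:=f_{l,v}^{-1}(y)$ for an arbitrary $y\in\La$ and applying the $\Z$-linear map $f_{l,w}^{-1}$ gives
\[
f_{l,w}^{-1}(y)-f_{l,v}^{-1}(y)\in f_{l,w}^{-1}(f_{l,v}(a))\,\Z\subseteq a\Z,
\]
which is exactly (MR2)(b) for $g$ relative to the edge $g_E(v'\edg w')=v\edg w$. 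Since $g\circ f=\id_\Gg$ and $f\circ g=\id_{\Gg'}$ hold by construction on vertices and, at each vertex, on the label automorphisms, this completes the proof. The only delicate point is the algebraic manipulation in (MR2)(b), where one must combine both parts of (MR2) for $f$ to transport the congruence through the inverse linear maps; the rest of the verification is formal bookkeeping.
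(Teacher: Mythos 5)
The paper does not actually prove this lemma itself; it cites \cite[Lemma~3.6]{L15}, so there is no in-paper argument to compare against. Evaluating your proof on its own: the forward direction and the algebraic verification of (MR2) for $g:=f^{-1}$ are both correct. In particular, specializing the congruence $f_{l,v}(x)-f_{l,w}(x)\in l'(f_E(v\edg w))\Z$ at $x=f_{l,v}^{-1}(y)$, using $f_{l,v}(a)=\pm b=\pm f_{l,w}(a)$ to replace the modulus $b\Z$ by $f_{l,v}(a)\Z$, and pushing through $f_{l,w}^{-1}$ is exactly the right manoeuvre, and it is the only nontrivial computation in the lemma.

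The one genuine gap is your treatment of the order-compatibility in (MR1) for $g$. You dispatch the claim that $g_V=f_V^{-1}$ is a poset morphism by asserting that ``the order on a moment graph is controlled by the edge structure,'' but axioms (MG1)--(MG3) of this paper do not say that: they require only that edge orientations respect $\le$, not that $\le$ is the transitive closure of the edge relation. A bijective order-preserving map need not have an order-preserving inverse. For instance, take $V=V'=\{a,b\}$ with no edges, the discrete order on $V$ and $a <' b$ on $V'$; the identity on vertices (with identity label maps) satisfies (MR1)--(MR2) vacuously and both (ISO1) and (ISO2), yet its vertex-level inverse is not a poset morphism, so it is not an isomorphism. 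Thus, with the definitions as stated in this paper, the lemma actually requires the additional hypothesis that $\le'$ is generated by $E'$ --- a hypothesis satisfied by all the examples in the paper (Bruhat and parabolic Bruhat orders, and quotient graphs by axiom (Q3)) and presumably built into the cited source's definition. Your proof should record that assumption explicitly rather than assert it as automatic; modulo that, the argument is complete.
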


We will need the following
\begin{dfn}\label{dfn:gmond}
A collection $\xi=\{\xi_v\}_{v\in V}$ of automorphisms of the lattice $\La$ is called a {\it $\Gg$-monodromy} if 
\begin{equation}\label{eq:mon}
\xi_v(\la)-\xi_w(\la)\in l(v\to w)\Z,\quad \forall\; v\to w\in E\text{ and }\la\in \La. 
\end{equation}
\end{dfn}
Observe that a $\Gg$-monodromy $\{\xi_v\}_{v\in V}$ which satisfies \rm{(MR2a)}, i.e. 
\[
\xi_v(l(v \edg w))=\pm l'(f_E(v \edg w))\quad\text{ for all }v\edg w \in E\text{ with }f_{V}(v)\neq f_{V}(w),
\] 
defines an automorphism $(\id_V,\{f_{l,v}:=\xi_v\}_{v\in V})$ of $\Gg$.

\subsection{Quotient graphs}
Given a moment graph $\Gg=\big((V,\le), l\colon E\to \Ln\big)$ we introduce the notion of quotient of $\Gg$ as follows. First, we choose an equivalence relation on $V$ which is compatible
with the structure of a moment graph: 

\begin{dfn}
Let `$\sim$' be an equivalence relation on $V$. We say that `$\sim$' is {\it $\Gg$-compatible} if the following two conditions are satisfied:
\begin{itemize}
\item[(EQV1)] 
$v\sim w$ $\impl$ $v\sim u$ for all $v\le u\le w$,
\item[(EQV2)]
$v_1\to w_1\in E$, $v_1\not\sim w_1$ $\impl$ for any $v_2\in V$, $v_2\sim v_1$ there exists a unique $w_2\in V$ such that $w_2\sim w_1$,   $v_2\to w_2\in E$; \\ moreover, $l(v_1\to w_1)= l(v_2\to w_2)$.
\end{itemize}
\end{dfn}

Then we take a quotient of $\Gg$ with respect to this equivalence relation:

\begin{dfn}
Given a $\Gg$-compatible equivalence relation on $V$ we define a {\it quotient} of $\Gg$ by `$\sim$' denoted $\Gg_\sim=\big((V_{\sim},\le_\sim),l_{\sim}\colon E_{\sim}\to\Ln\big)$ to be the oriented labelled graph where
\begin{itemize}
\item[(Q1)] 
 $V_{\sim}$ is the set of equivalence classes $\{[v]\}_{v\in V}$ of $V$ with respect to `$\sim$';
\item[(Q2)] 
$E_{\sim}:=\{[v]\to [w] \mid v\not\sim w,\,\exists v'\sim v, \,w'\sim w \text{ with } v'\to w'\}$;
\item[(Q3)] 
$\le_{\sim}$ is the transitive closure of relations $[v]\le_{\sim} [w]$, $[v]\to [w]\in E_{\sim}$;
\item[(Q4)] 
$l_{\sim}([v]\to [w]):=l(v'\rightarrow w')$, where $v'\sim v$,  $w'\sim w$ and $v'\to w'\in E$.
\end{itemize}
 Observe 
 that when '$\sim$' is trivial, i.e. $v\sim w$ for all $v$, $w\in V$, then $V_\sim$ on $\La$ consists of one point only, together with the lattice $\La$ ($E_\sim$ is an empty set and there is no label function).
\end{dfn}

Finally, we show that the graph obtained in such a way is a moment graph:

\begin{lem}
Given a moment graph $\Gg$ and a $\Gg$-compatible equivalence relation `$\sim$', the quotient $\Gg_\sim$ is a moment graph on $\La$.
\end{lem}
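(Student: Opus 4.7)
The plan is to verify the three axioms (MG1)--(MG3) for $\Gg_\sim$ in turn, bundling the easy verifications together and spending most of the effort on the antisymmetry of $\le_\sim$.

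First I would dispatch (MG2): the label map $l_\sim$ must be shown to be well defined, since the same edge $[v]\to[w]\in E_\sim$ could a priori be represented by different lifts $v_1\to w_1$ and $v_2\to w_2$ in $E$. Here (EQV2) does all the work: given the lift $v_1\to w_1\in E$ and any $v_2\sim v_1$, the axiom produces a \emph{unique} $w'\sim w_1$ with $v_2\to w'\in E$ and with $l(v_1\to w_1)=l(v_2\to w')$; by uniqueness applied to $w_2$ itself we get $w'=w_2$, hence the label is independent of the representative. Non-triviality of the label in $\Ln$ is inherited from $l$. Axiom (MG3) is then immediate: by (Q2) an edge $[v]\to[w]$ requires $v\not\sim w$, so $[v]\neq[w]$, and the relation $[v]\le_\sim[w]$ holds by the very definition (Q3) of the order as the (reflexive) transitive closure of the edge relation.

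The non-trivial content is the antisymmetry of $\le_\sim$, which together with reflexivity and transitivity (both built into (Q3)) proves (MG1). Suppose, towards a contradiction, that $[v]\neq [w]$ but $[v]\le_\sim [w]$ and $[w]\le_\sim [v]$. Unwinding (Q3), there are chains in $E_\sim$ of the form
\[
[v]=[u_0]\to[u_1]\to\cdots\to[u_n]=[w],\qquad [w]=[u'_0]\to[u'_1]\to\cdots\to[u'_m]=[v].
\]
The key trick is to promote each of these chains of classes to an actual chain of vertices in $V$ by iterating (EQV2): starting with any $v_0\in[v]$, from the first edge $[u_0]\to[u_1]$ I obtain a unique lift $u_1'\sim u_1$ with $v_0\to u_1'\in E$; using $u_1'$ as the next base vertex I proceed to the next edge, and so on. This yields $v_0<u_1'<u_2'<\cdots<w^\ast$ in $V$ with $w^\ast\in[w]$. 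The second chain, started at $w^\ast$, produces in the same way a strict chain $w^\ast<\cdots<v^\ast$ in $V$ with $v^\ast\in[v]$.

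Concatenating, we obtain $v_0<w^\ast<v^\ast$ with $v_0\sim v^\ast$ (both lie in $[v]$). Now (EQV1) applies: since $v_0\sim v^\ast$ and $v_0\le w^\ast\le v^\ast$, it forces $v_0\sim w^\ast$, i.e.\ $[v]=[w^\ast]=[w]$, contradicting our assumption $[v]\neq[w]$. The only anticipated subtlety in carrying out this plan is making sure the inductive lift of the chain using (EQV2) is correctly phrased (the axiom lifts each edge one step at a time, which is exactly what we need), so the main obstacle is simply being careful with the bookkeeping of representatives; once that is in place, antisymmetry, and hence (MG1), follows.
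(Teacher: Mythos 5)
Your proof is correct and takes essentially the same route as the paper: the heart of the argument --- lifting a chain of $E_\sim$-edges to a chain of $E$-edges by iterating (EQV2) and then invoking (EQV1) to collapse the equivalence classes --- is precisely the paper's argument that $\Gg_\sim$ has no oriented cycles, only rephrased in terms of antisymmetry of $\le_\sim$ (an equivalent formulation). Your explicit check that $l_\sim$ is well defined via the uniqueness clause of (EQV2) is a sensible addition that the paper leaves implicit.
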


\begin{proof} 
It reduces to show that $\Gg_\sim$ has no oriented cycles. Indeed, suppose there is an oriented cycle $[v_1]\to [v_2]\to \ldots \to [v_n]\to [v_1]$. Then by \rm{(Q2)} and \rm{(EQV2)} there exists a  path $v_1\to v_2'\to \ldots \to v_n'\to v_1'$ on the graph $\Gg$ for certain $v_i'\sim v_i$. Hence, we obtain $v_1\le v_2'\le \ldots \le v_n'\le v_1'$ with $v_1\sim v_1'$ and by \rm{(EQV1)} it implies $[v_1]= [v_i]$ for all $i$.
\end{proof}

\begin{ex}\label{ex:quotBruhat}
Let $\Gg=\Gg(W)$ be the Bruhat graph and let $W_\Th$ be the parabolic subgroup of $W$ of Example~\ref{ex:Bruhat}. Then the relation on $W$ defined by 
\[
v\sim w \eqst vW_\Th=wW_{\Th}
\] 
is a $\Gg$-compatible equivalence relation and $\Gg_\sim$ can be identified with $\Gg(W^\Th)$, the parabolic Bruhat moment graph from Example~\ref{ex:Bruhat}.
\end{ex}

We will use the following notion introduced in~\cite{B}:
\begin{dfn}\label{ex:specialmatching}
Given a poset $(V,\le)$ denote by `$\trg$' the covering relation on $V$, that is $v\trg w$ if and only if $v\leq w$ and
\[
v\leq u\leq w \impl u=v\text{ or }u=w. 
\]
By a {\it special matching} of $V$ we call a (set) bijection $M\colon V\ra V$ such that 
\begin{itemize}
\item
for any $v\in V$ either $M(v)\trg v$ or $v\trg M(v)$, and  
\item
if $M(v)\neq w$, then $v\trg w$ $\impl$ $M(v)\leq M(w)$.
\end{itemize}
\end{dfn}

\begin{lem}\label{lem:specm}
Given a moment graph $\Gg=\big((V,\le), l\colon E\to \Ln\big)$ assume that the poset $(V,\le)$ admits a special matching such that if $v\to w\in E$, then 
\begin{itemize}
\item[(i)] $M(v)\to M(w)\in E$, and
\item[(ii)] $l(M(v)\to M(w))= l(v\to w)$.
\end{itemize}
Then there is a $\Gg$-compatible equivalence relation on $V$ with equivalence classes given by $[v]=\{v,M(v)\}$, $v\in V$.
\end{lem}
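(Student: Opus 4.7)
The plan is to verify the axioms (EQV1) and (EQV2) of a $\Gg$-compatible equivalence relation directly for the relation whose classes are $[v]=\{v,M(v)\}$; since a special matching is a fixed-point-free involution (the condition $M(v)\trg v$ or $v\trg M(v)$ precludes $M(v)=v$), these two-element classes partition $V$ and determine a bona fide equivalence relation on $V$.

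For (EQV1), I suppose $v\sim w$ with $v\ne w$, so $w=M(v)$, and take $v\le u\le w$. The only non-vacuous case is $v<M(v)$, in which case the defining axiom of a special matching forces $v\trg M(v)$, i.e., $M(v)$ covers $v$ in the poset. Hence $[v,M(v)]=\{v,M(v)\}$ and $u\in[v]=\{v,w\}$, whence $u\sim v$.

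For the existence and label-matching parts of (EQV2), assume $v_1\to w_1\in E$ with $v_1\not\sim w_1$, and pick $v_2\sim v_1$. If $v_2=v_1$, take $w_2=w_1$; the label condition is immediate. If $v_2=M(v_1)$, condition (i) applied to $v_1\to w_1$ gives $M(v_1)\to M(w_1)\in E$, so I set $w_2=M(w_1)\in[w_1]$; condition (ii) then supplies $l(v_2\to w_2)=l(v_1\to w_1)$.

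The main obstacle is the uniqueness of $w_2$: I must rule out that both $v_2\to w_1$ and $v_2\to M(w_1)$ lie in $E$. Assuming for contradiction that $v_1\to w_1$ and $v_1\to M(w_1)$ both belong to $E$, applying (i) twice produces the complete ``square'' $\{v_1,M(v_1)\}\times\{w_1,M(w_1)\}\subset E$ of four distinct directed edges (the four vertices are distinct because $v_1\not\sim w_1$). Axiom MG3 then forces $v_1,M(v_1)<w_1,M(w_1)$ in the poset, while the defining axiom of a special matching arranges each of $\{v_1,M(v_1)\}$ and $\{w_1,M(w_1)\}$ as a covering pair. I would then invoke the monotonicity axiom of a special matching (if $v\trg u$ and $M(v)\ne u$, then $M(v)\le M(u)$), applied to the cover relations between these matched pairs together with the poset inequalities coming from the four edges, to derive the required contradiction after a short case analysis on the direction (upward or downward) of each of the matched pairs. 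This case analysis is the technical heart of the argument.
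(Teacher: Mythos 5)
The paper gives no proof of this lemma, so I'll assess your argument directly. Your treatment of (EQV1) and of existence and label-matching in (EQV2) is correct, and the reduction of uniqueness to the ``complete square'' $v_1,M(v_1)\to w_1,M(w_1)\subset E$ (via two applications of hypothesis (i)) is exactly right. The gap is in how you propose to extract the contradiction from the square.

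Applying the special-matching monotonicity axiom to the internal cover relations $v_1\trg M(v_1)$ and $w_1\trg M(w_1)$ gives nothing: the axiom requires $M(v)\ne w$, which fails precisely when $w=M(v)$. And the four edges of the square are moment-graph edges, not cover relations, so the axiom cannot be applied to them directly either. A case analysis on the orientations of the two matched pairs, combined with the poset inequalities, does not by itself produce the contradiction. What actually works is a \emph{chain} argument: suppose WLOG $v_1\trg M(v_1)$. Along any saturated chain $M(v_1)=u_0\trg u_1\trg\cdots\trg u_k=u$ starting at $M(v_1)$, one shows by induction that $M(u_i)\trg u_i$ for all $i$: the base case is $M(u_0)=v_1\trg M(v_1)=u_0$; for the step, $M(u_i)<u_i<u_{i+1}$ forces $M(u_i)\ne u_{i+1}$, the axiom gives $M(u_i)\trg M(u_{i+1})$, and a length count puts $M(u_{i+1})$ one rank below $u_{i+1}$. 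Applying this with $u=w_1$ gives $M(w_1)<w_1$, and applying it with $u=M(w_1)$ gives $w_1<M(w_1)$, an outright contradiction. Note that this propagation step relies essentially on the covering conclusion $M(u)\trg M(w)$ of Brenti's axiom; the weaker $M(v)\le M(w)$ written in Definition~\ref{ex:specialmatching} (which appears to be a typographical slip for $\trg$) does not suffice, and indeed with the $\le$ form there are counterexamples to uniqueness (e.g.\ a length-$3$ chain with the obvious pairing). You should flag this and carry out the chain induction rather than rely on a one-shot application of the axiom to the square.

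One more small point: you assert that a special matching is an involution. The paper only says ``bijection,'' but as you correctly observe the equivalence classes $\{v,M(v)\}$ only partition $V$ if $M^2=\mathrm{id}$, and Brenti's definition explicitly builds in the involution requirement, so it is worth stating that you are using the standard (involutive) notion.
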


\begin{ex}(cf. \cite[Prop.~4.1]{B})
Suppose $[v,w]$, where $v,w\in W$ is the Bruhat interval which is stable under the right multiplication by a simple reflection~$s$. Then $M(u):=us$ for any $u\in [v,w]$ defines a special matching which satisfies both (i) and (ii) of the lemma.
\end{ex}

\section{Structure algebras of moment graphs}\label{sec:stralg}

In this section we introduce structure algebras associated to the symmetric algebra and the group ring of a lattice $\La$, respectively. We study
its behaviour with respect to characteristic maps and filtrations. We then discuss twisted pull-back maps induced by morphisms of moment graphs and monodromies.

\subsection{Two filtrations}
Consider the two covariant functors 
\[
S^*(-)\colon \La\mapsto S^*(\La)\;\text{ and }\;\Z[-]\colon \La\mapsto  \Z[\La]
\]
from the category of lattices (free finitely generated abelian groups) to the category of commutative rings given by taking the symmetric algebra and the group ring of a lattice $\La$, respectively. By definition, the $i$-th graded component $S^i(\La)$ is additively generated by monomials $\la_1\la_2\ldots \la_i$ with $\la_j \in \La$ and the group ring $ \Z[\La]$ is additively generated by exponents $e^\la$, $\la\in \La$. Let $I_a$ and $I_m$ denote the kernels of the augmentation maps $\ep_a\colon S^*(\La)\to  \Z$ and $\ep_m\colon \Z[\La]\to  \Z$ given by $\la\mapsto 0$. By definition, the ideal $I_a$ consists of polynomials with trivial constant terms and the ideal $I_m$ is generated by differences $(1-e^{-\la})$, $\la\in \La$. Consider the respective $I$-adic filtrations:
\[
S^*(\La)=I_a^0 \supseteq I_a \supseteq I_a^2 \supseteq \ldots  \;\text{ and }\; \Z[\La]=I_m^0 \supseteq I_m \supseteq I_m^2 \supseteq \ldots
\]
Let
\[
gr_a^*(\La)=\bigoplus_{i\ge 0} I_a^{i}/I_a^{i+1} \;\text{ and }\; gr_m^*(\La)=\bigoplus_{i\ge 0}I_m^i/I_m^{i+1}
\]
denote the associated graded rings. Observe that  $gr_a^*(\La)= S^*(\La)$.

\begin{ex}
If $\La\simeq\Z$, then the ring $S^*(\La)$ can be identified with the polynomial ring in one variable $ \Z[x]$, where $x$ is a generator of $\La$. The group ring $ \Z[\La]$ can be identified with the Laurent polynomial ring $\Z[t,t^{-1}]$, where $t=e^x$. The augmentation maps $\ep_a$ and $\ep_m$ are given by
\[
\ep_a\colon x\mapsto 0 \;\text{ and }\; \ep_m\colon t\mapsto 1.
\]
We have $I_a=(x)$ and $I_m$ is additively generated by differences $(1-t^n)$, $n\in\Z$.
\end{ex}

\subsection{Structure algebras and characteristic maps}
We are ready to introduce the following central object of the present paper:

\begin{dfn}
Let $\Gg=\big((V,\le), l\colon E\to \Ln\big)$ be a moment graph on $\La$. Consider the algebras $S=\Z[\La]$ or $S=S^*(\La)$. By a {\it structure algebra} of $\Gg$ we call the $S$-submodule
\[
\SA(\Gg):=\left\{(z_v)_v\in \prod_{v\in V} S\mid z_v-z_w \in x_{l(v\to w)}S,\; \forall \; v\to w\in E\right\}
\] 
with the coordinate-wise multiplication, where $x_\la=1-e^{-\la}$ for $S=\Z[\La]$ and $x_\la=\la$ for $S=S^*(\La)$. In the first case we denote it by $\SA_m(\Gg)$ and in the second case by $\SA_a(\Gg)$. 
\end{dfn}

The grading of $S^*(\La)$ induces a natural grading on the structure algebra $\SA_a(\Gg)$. We denote by $\SA_a^i(\Gg)$ its $i$th-graded homogeneous component. Observe that the algebra $\SA_m(\Gg)$ is not necessarily graded but only filtered. The $I_m$-adic filtration on $\Z[\La]$ induces the (coordinate-wise) filtration on $\SA_m(\Gg)$.

\begin{rem}
The structure algebra $\SA_a(\Gg)$ appears naturally as ring of global sections of the so called structure sheaf on moment graph. It also computes the $T$-equivariant Chow ring/singular cohomology in the case of a flag variety \cite{KK86}. On the other side, the structure algebra $\SA_m(\Gg)$ can be viewed as $K$-theoretic version of $\SA_a(\Gg)$ as it computes the $T$-equivariant $K$-theory of a flag variety \cite{KK90}. We refer to Section~\ref{sec:KacM} for a more detailed description of these structure algebras in the context of equivariant cohomology/$K$-theory.
\end{rem}

For any automorphism of the lattice $\La$ we denote by the same symbol the induced automorphism of $S$.
\begin{dfn}
Given a $\Gg$-monodromy $\xi=\{\xi_v\}_{v\in V}$ the map 
\[
S\ra \prod_{v\in V}S,\quad z\mapsto (\xi_v(z))_{v\in V}
\] 
induces a ring homomorphism $c^\xi\colon S\ra \SA$ called the {\it $\xi$-characteristic map}.
\end{dfn}

\begin{ex} \label{ex:xicharmap} 
If $\xi=\{\id_v\}_{v\in V}$, then the $\xi$-characteristic map is nothing but the structure map
\[
S\ra \SA, \quad z\mapsto (z)_{v\in V}.
\]
Let $\Gg=\Gg(W)$ be a Bruhat moment graph and let $\xi=\{\xi_w\}_{w\in W}$, where $\xi_w$ is the automorphism of the root lattice $\La=\La_r$ given by the $W$-action $\xi_w(\la):=w(\la)$. Then the $\xi$-characteristic map coincides with the characteristic map on structure algebras of \cite[\S4]{Fi08}.
\end{ex}

\subsection{Pullbacks} We extend the notion of a pull-back map for an equivariant cohomology to the setup of structure algebras as follows:

\begin{lem}\label{lem:pullb}
Let $f=(f_V,\{f_{l,v}\}_{v\in V})\colon \Gg\to \Gg'$ be a moment graph morphism and let $\xi=\{\xi_v\}_{v\in V}$ be a $\Gg$-monodromy such that if $v\edg w\in E$ and $f_V(v)\neq f_V(w)$, then $\xi_v(l'(f_E(v\edg w)))\in l(v\edg  w)\Z$. 

Then there is a ring homomorphism between structure algebras given by
\[
f^{\xi\ast}\colon \SA' \ra \SA, \quad (z_{v'})_{v'\in V'}\mapsto (\xi_v(z_{f(v)}))_{v\in V}.
\]
which we call a pull-back map induced by $f$ and twisted by $\xi$.
\end{lem}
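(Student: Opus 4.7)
The plan is to verify two things: (i) for each $(z_{v'})_{v'\in V'}\in\SA'$ the tuple $\bigl(\xi_v(z_{f_V(v)})\bigr)_{v\in V}$ satisfies the defining congruences of $\SA$, and (ii) the resulting assignment respects the ring operations. Point (ii) is immediate from the facts that each $\xi_v$ is a ring automorphism of $S$ (so $\xi_v(1)=1$ and $\xi_v$ preserves sums and products) and that both structure algebras carry coordinate-wise operations. All the substance lies in (i).

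For (i), fix a directed edge $v\to w\in E$, put $z'=z_{f_V(v)}$ and $z''=z_{f_V(w)}$, and decompose
\[
\xi_v(z')-\xi_w(z'')=\xi_v(z'-z'')+\bigl(\xi_v(z'')-\xi_w(z'')\bigr).
\]
I treat the two summands separately: the first is controlled by the hypothesis linking $\xi$ to the morphism $f$, the second by the $\Gg$-monodromy property. For the first summand, if $f_V(v)=f_V(w)$ it is zero, so assume $f_V(v)\neq f_V(w)$ and set $e'=f_E(v\edg w)$. The $\SA'$-condition yields $z'-z''=x_{l'(e')}\cdot u$ for some $u\in S$, while the hypothesis gives $\xi_v(l'(e'))=m\,l(v\to w)$ for some $m\in\Z$. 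A direct check then shows $\xi_v(x_{l'(e')})\in x_{l(v\to w)} S$ in both cases: in the additive case the element is simply $m\,l(v\to w)$; in the multiplicative case one uses the factorisation $1-e^{-ml}=(1-e^{-l})(1+e^{-l}+\cdots+e^{-(m-1)l})$ for $m>0$, and an analogous identity for $m\le 0$ obtained by swapping $e^{\pm l}$ and adjusting signs, to conclude $1-e^{-ml}\in(1-e^{-l})\Z[\La]$. Hence $\xi_v(z'-z'')\in x_{l(v\to w)}S$.

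For the second summand, the claim is that $\xi_v(z)-\xi_w(z)\in x_{l(v\to w)} S$ for \emph{every} $z\in S$. This is where the $\Gg$-monodromy condition of Definition~\ref{dfn:gmond}, which only asserts the analogous congruence for $\la\in\La$, has to be promoted to all of $S$ by linearity. On a monomial $\la_1\cdots\la_k\in S^*(\La)$ a telescoping expansion reduces the difference to a $\Z$-linear combination of the individual $\xi_v(\la_i)-\xi_w(\la_i)\in l(v\to w)\Z$; on a generator $e^\la\in\Z[\La]$ one writes
\[
\xi_v(e^\la)-\xi_w(e^\la)=e^{\xi_w(\la)}\bigl(e^{\xi_v(\la)-\xi_w(\la)}-1\bigr),
\]
and since $\xi_v(\la)-\xi_w(\la)=n\,l(v\to w)$ for some $n\in\Z$, the same factorisation trick as above places $e^{nl}-1$ in $(1-e^{-l})\Z[\La]$. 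Summing the two contributions gives the required divisibility. The main technical obstacle, and the only step where the two settings $S=\Z[\La]$ and $S=S^*(\La)$ require genuinely different algebra, is this promotion from the lattice to $S$ in the multiplicative case, where one must handle both signs of the integer coefficient in order to land in the ideal generated by $1-e^{-l}$ rather than by $1-e^{l}$; once the elementary factorisation of $e^{nl}-1$ is recorded, the rest is routine bookkeeping.
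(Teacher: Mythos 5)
Your proof is correct and follows essentially the same route as the paper: both split $\xi_v(z_{f_V(v)})-\xi_w(z_{f_V(w)})$ into a piece controlled by the $\SA'$-condition together with the hypothesis linking $\xi$ to $f$, and a piece controlled by the $\Gg$-monodromy condition~\eqref{eq:mon}. You are appreciably more explicit than the published argument about the multiplicative case (the factorisation of $1-e^{-m\la}$ through $1-e^{-\la}$ for arbitrary $m\in\Z$, and the promotion of the monodromy congruence from $\La$ to all of $\Z[\La]$ via $e^{\xi_w(\la)}(e^{\xi_v(\la)-\xi_w(\la)}-1)$), details the paper elides by writing $l(v\to w)S$ where the structure-algebra condition actually requires $x_{l(v\to w)}S$.
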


\begin{proof} 
Suppose $v\edg w\in E$ and $f_V(v)=f_V(w)$. Then $z_{f_V(v)}=z_{f_V(w)}=z$ and by~\eqref{eq:mon} we obtain
\[
\xi_v(z)-\xi_w(z)\in l(v\to w)S.
\]
Suppose $v\edg w\in E$ and $f_V(v)\neq f_V(w)$. Then by \rm{(MR1)} $f_V(v) \edg f_V(w) \in E'$ and
\[
z_{f_V(v)}-z_{f_V(w)}\in l'(f_E(v\edg w))S.
\]
So there exists a $g\in S$ such that $z_{f_V(v)}=z+gl'(f_E(v\edg w))$, $z=z_{f_V(w)}$, and hence by~\eqref{eq:mon} and by the hypothesis on $\xi_v$ we obtain
\begin{align*}
\xi_v(z_{f_V(v)})-\xi_w(z_{f_V(w)})&=\xi_v\big(z+gl'(f_E(v\edg w))\big)-\xi_w(z)\\
&=\xi_v(z)-\xi_w(z)+\xi_v(g l'(f_E(v\edg w))) \in l(v\edg w)S.\qedhere
\end{align*}
\end{proof}

\begin{ex} 
Take the Bruhat moment graph $\Gg=\Gg(W)$ and $\Gg'=\Gg(W^\Th)=\Gg_\sim$, where $\sim$ is the $\Gg$-compatible relation of Example~\ref{ex:quotBruhat}. Take the trivial monodromy $\xi=\{\id_w\}_{w\in W}$. Then $f^{\xi\ast}\colon \SA' \to \SA$ is the ring homomorphism sending the element $(z_v')_{v'\in W^\Th}$ to the element $(z_v)_{v\in W}$ with $z_v=z_{v'}$ if $v\in v'W_\Th$. 
By the results of \cite{KK86, KK90} this map coincides with the pull-back on the $T$-equivariant Chow ring/cohomology (resp. $K$-theory) induced by the usual projection $G/B\to G/P_{\Th}$.
\end{ex}


\section{Push-forwards on structure algebras}\label{sec:pushpull}
In the present section we introduce and study push-forwards on structure algebras induced by fibrations of moment graphs. 
First, we introduce the notion of a fibration of moment graphs. Then, twisting it by a monodromy $\xi$ we obtain the so called $\xi$-fibration.
Given a regular $\xi$-fibration we construct the push-forward map and prove the projection formula. 
Finally, for fibrations associated to special matchings, we produce analogues of push-pull operators on moment graphs. 

\subsection{Fibrations}
Given a moment graph $\Gg=\big((V,\le), l\colon E\to \Ln\big)$ and a $\Gg$-compatible equivalence relation `$\sim$' consider the morphism of (oriented labelled) graphs $\pi\colon \Gg\to \Gg_\sim$ induced by the quotient set map $v\to [v]$. 

\begin{dfn}\label{dfn:fibration}
Consider the moment graph obtained as a full subgraph of $\Gg$ by restricting the vertex set to the equivalence class $[v]$. We call it the {\it fibre} of $\pi$ at $[v]$ and denote it by $\Gg_{[v]}$. 

We say that the quotient morphism $\pi$ is a {\it fibration} if for any $[v],[w]\in V_\sim$ there is a moment graph isomorphism 
\[
f^{[v],[w]}=(f_V^{[v],[w]}, \{f^{[v],[w]}_{l,y}\})_{y\in [v]} \colon \Gg_{[v]} \stackrel{\simeq}\ra \Gg_{[w]}\quad\text{such that}
\] 
\begin{itemize}
\item[(FB1)]
for all $u,v,w\in V$, we have $f^{[v],[v]}=\textrm{Id}_{\Gg_{[v]}}$ and $f^{[u],[v]}\circ f^{[v],[w]}=f^{[u],[w]}$;
\item[(FB2)] for any $w\in V$, 
for all $y,y'\in[v]$ such that $y\edg y'\in E$, we have $f^{[v],[w]}_l:=f^{[v],[w]}_{l,y}=f^{[v],[w]}_{l,y'}$ and $f^{[v],[w]}_l(l(y\edg y'))=\pm l(f^{[v],[w]}_V(y)\edg f^{[v],[w]}_V(y'))$.
\end{itemize}
\end{dfn}

\begin{ex}\label{ex:fibbr}
If $\Gg=\Gg(W)$ and $\Gg_\sim=\Gg(W^{\Th})$ as in Example~\ref{ex:quotBruhat}, then $\pi$ is a fibration with isomorphisms  $f^{[v],[w]}\colon \Gg_{[v]}\to \Gg_{[w]}$ given by
\[
f^{[v],[w]}_V\colon vW_\Th\rightarrow wW_\Th, \; u\mapsto \overline{w}\, \overline{v}^{-1}u,\; \text{ and }\; f^{[v],[w]}_l: \la\mapsto \overline{w}\,\overline{v}^{-1}(\la), \la\in \La,
\]
where $\overline{z}$ denotes the minimal lenght representative of the coset $zW_\Th$.
Notice that all the fibres are isomorphic to the Bruhat graph $\Gg(W_\Th)$ which can be identified with the fibre $\Gg_{[e]}$ over the neutral element.
\end{ex}

\begin{ex}\label{ex:specialmatching2}
Assume that the vertex poset $(V,\le)$ of $\Gg$ admits a special matching $M\colon V\to V$ of Definition~\ref{ex:specialmatching}. Then by Lemma~\ref{lem:specm} for any $v\in V$ the fibre  $\Gg_{[v]}$ is a moment graph consisting of two vertices and one labelled arrow $E_{[v]}=\{v \edg M(v)\}$ oriented according to the partial order. As a lattice for $\Gg_{[v]}$ we may take the rank one lattice $\La_{[v]}$ generated by the label $l(v\edg M(v))$. There are obvious isomorphisms of moment graphs $f^{[v],[w]}$ such that $f_V^{[v],[w]}(v)=w$ or $M(w)$ and $f^{[v],[w]}_l(l(v\edg M(v)))=\pm l(w\edg M(w))$ which define a fibration.
\end{ex}

\subsection{Fibers and monodromy}

Given $v\in V$ consider the multi-set of labels $L_v$ of edges adjacent to $v$, i.e.
\[
L_v=\{l(v \edg w) \mid v\edg w\in E\}.
\]
We denote by $L_{v,\sim}$ its subset entirely contained in the fibre $\Gg_{[v]}$, i.e.
\[
L_{v,\sim}=\{l(v \edg w) \mid v\edg w\in E,\; v\sim w\}.
\]

We denote by $-L_{v,\sim}$ the multi-set $\{-l(v \edg w) \mid v\edg w\in E,\; v\sim w\}$ respectively. 
We assume that all moment graphs are {\it locally finite}, that is for any $v$, $w\in V$ the interval $\{u\in V\mid v\leq u\leq w \}$ is finite.

\begin{dfn}\label{dfn:fibcompatible} 
Let $\pi\colon \Gg\to \Gg_\sim$ be a fibration with isomorphisms $(f^{[v],[w]})_{(v,w)\in V\times V}$ between the fibres as in Definition~\ref{dfn:fibration}. Let $[e]\in V_\sim$ be a distinguished vertex of the quotient graph. A collection $\xi=\{\xi_y\}_{y\in V}$ of automorphisms of $\La$ is said to be compatible with the fibre $\Gg_{[e]}$ if  
\begin{itemize}
\item[(CF1)] 
For any $y \in [v]$ we have 
\[
\xi_y\circ f^{[v],[e]}_l(\prod_{\ga\in L_{y,\sim}}\ga)=\pm\prod_{\ga\in L_{y,\sim}}\ga,
\] 
where the products are in $S^*(\La)$. In particular, for any  $\ga\in L_{y,\sim}$ we have \[\xi_y(f^{[v],[e]}_l(\ga))\in \pm L_{y,\sim}.\]
\item[(CF2)] 
For any $y\in [v]$ set 
\[N_y^{\xi}=\{f^{[v],[e]}_l(\ga)\mid \ga\in L_{y,\sim}\hbox{ and } \xi_y(f^{[v],[e]}_l(\ga))\in -L_{y,\sim}\}.
\]
Then for any $y\to y' \in E$ with $y'\in [v]$ the following two conditions hold:
\begin{itemize}
\item[(a)] $\# N_y^{\xi}\not\equiv \#N_{y'}^{\xi}\mod 2$;
\item[(b)] $\sum_{\be\in N_y^{\xi}}\xi_y(\be)-\sum_{\de\in N_{y'}^{\xi}}\xi_{y'}(\de)\in l(y\to y')\Z$.
\end{itemize}
\end{itemize}
\end{dfn}

\begin{ex}\label{ex:gcomf}
In the setup of Examples~\ref{ex:Bruhat} and \ref{ex:quotBruhat}, if $v\in W^\Th$ is a minimal length representative and $e$ is the neutral element of $W$, then for any $y\in [v]$ we have $v(L_{e,\sim})=L_{y,\sim}$, where $L_{e,\sim}$ is the set of positive roots $\Phi_+^\Th$ of the root subsystem of $\Phi$ spanned by simple roots from $\Th$. So $L_{y,\sim}$ does not depend on a choice of the representative so we can simply denote it by $L_{[v]}$.

Suppose $y\in [v]$ so that by the parabolic decomposition $y=vu$, where $v\in W^\Th$ and $u\in W_\Th$. 

Set $\xi_y(\la):=y(\la)$ and $f_l^{[v],[e]}(\la):=v^{-1}(\la)$, $\la\in\La$ as in Example~\ref{ex:fibbr}. Then we obtain 
\[
\xi_y\big( f^{[v],[e]}_l(\prod_{\ga\in L_{[v]}}\ga)\big)=vu( \prod_{\be\in L_{[e]}}\be).
\]
Denote by $\prod \Phi_+^\Th$ the product $\prod_{\be\in \Phi_+^\Th}\be$. Since $u(\prod \Phi_+^\Th)=(-1)^{\ell(u)}(\prod \Phi_+^\Th)$ for any $u\in W_\Th$, the property (CF1) follows.

By definition we have
\begin{align*}
N_y^\xi &=\{v^{-1}(\ga)\mid \ga\in v(\Phi_+^\Th)\text{ and }yv^{-1}(\ga)\in v(\Phi_-^\Th) \}\\
&=\{\be\mid \be\in \Phi_+^\Th \text{ and }u(\be)\in \Phi_-^\Th \},
\end{align*}
therefore, the cardinality $\#N_y^\xi$ coincides with the length $\ell(u)$ of $u$.

Suppose that $y\to y'\in E$ and $y'\in[v]$, then $y=s_\al y'$ and $y'=vu'$, $u'\in W_\Th$. 

Therefore, $vu=s_\al vu'$ which implies that $u'=s_{v^{-1}(\al)}u$ and, hence, $\ell(u)\not\equiv\ell(u')\mod 2$. This verifies property (CF2a).

As for (CF2b), observe that there is a bijection (c.f.~\cite[Proposition 3.2.14]{CS})
\[
N_{y}^\xi\setminus N_{s_{y^{-1}(\al)}}^\xi\stackrel{\sim}{\ra}N_{y'}^\xi\setminus N_{s_{y^{-1}(\al)}}^\xi, \qquad \be\mapsto s_{y^{-1}(\al)}(\be),
\]
moreover, $N_{y}^\xi\cap N_{s_{y^{-1}(\al)}}^\xi\subset N_{y'}^\xi\cap N_{s_{y^{-1}(\al)}}^\xi$ and 
\[
\be\in (N_{y'}^\xi\cap N^\xi_{s_{y^{-1}(\al)}})\setminus N_y^\xi\quad\Leftrightarrow\quad -s_{y^{-1}(\al)}(\be)\in (N_{y'}^\xi\cap N^\xi_{s_{y^{-1}(\al)}})\setminus N_y^\xi.
\]
Therefore, we obtain
\begin{align*}
\sum_{\be\in N_y^\xi}y(\be)-\sum_{\de\in N_{y'}^\xi}y'(\de)&=\sum_{\be \in N_{y}^\xi\setminus N_{s_{y^{-1}(\al)}}^\xi}y(\be)-y'(s_{y^{-1}(\al)}(\be))\\
&\quad + \sum_{\be\in N_{y}^\xi\cap N_{s_{y^{-1}(\al)}}^\xi}y(\be)-y'(\be)\\
&\quad -\frac{1}{2}\sum_{\be\in (N_{y'}^\xi\cap N^\xi_{s_{y^{-1}(\al)}})\setminus N_y^\xi}y'(\be+(-s_{y^{-1}(\al)}(\be)))\\
&\in \mathbb{Z}\al.
\end{align*}
\end{ex}

\begin{ex}
In the fibration of Example~\ref{ex:specialmatching2} choose $e\in V$ and let $\Gg_{[e]}$ be the respective fibre. 
Then a $\Gg$-monodromy  $\xi=\{\xi_y\}_{y\in V}$ of Definition~\ref{dfn:gmond} is compatible with the fibre $\Gg_{[e]}$ if and only if 
\begin{equation}\label{eqn:specialmatching3}
\xi_y\big(l(e \edg M(e))\big)=-\xi_{M(y)}\big(l(e \edg M(e))\big)=\pm l(y \edg M(y)).
\end{equation}
Indeed, the properties (CF1), (CF2a) and (CF2b) follow from
\begin{itemize}
\item $f^{[y][e]}\big(l(y \edg M(y))\big)=\pm l(y \edg M(y))$,
\item $L_{[y]}=\{l(y \edg M(y))\}$ for any $y\in V$,
\item $
N_{y}^\xi=\left\{
\begin{array}{ll}
l(y \edg M(y))&\text{ if }\xi_{y}\big(l(e \edg M(e))\big)=- l(y\edg M(y)),\\
\emptyset&\text{ otherwise}.
\end{array}
\right.
$\end{itemize} 
\end{ex}

\subsection{Push-forwards}

Let $\Gg=\big((V,\le), l\colon E\to \Ln\big)$ be a moment graph together with a $\Gg$-compatible equivalence relation `$\sim$', a $\Gg$-monodromy $\xi=\{\xi_v\}_{v\in V}$ and a distinguished vertex $e\in V$. We now introduce the notion of a $\xi$-fibration between moment graphs.
\begin{dfn}
The induced quotient map $\Gg\to \Gg_\sim$ is called a {\em $\xi$-fibration} and denoted $\pi^\xi$ if it is a fibration such that the monodromy $\xi$ is compatible with the fibre $\Gg_{[e]}$. 
\end{dfn}

\begin{dfn} \label{dfn:regf}
We say that a $\xi$-fibration $\pi^\xi$ is regular if 
\begin{itemize}
\item
$L_{v,\sim}=L_{w,\sim}$ for any $v \sim w$ in $V$ (observe that this implies that $L_{v,\sim}$ does not depend on a choice of a representative so it can be denoted by $L_{[v]}$) 
\end{itemize}
and for each $[v]\in V_\sim$ we have 
\begin{itemize}
\item
$x_\ga=1-e^{-\ga}$ is irreducible in $\Z[\La]$ for each $\ga\in L_{[v]}$, and
\item
$x_\ga \mid x_{\ga'}x$, $x\in \Z[\La]$,  $\ga,\ga'\in L_{[v]}$, $\ga\neq \ga'$ $\impl$ $x_\ga\mid x$.
\end{itemize}
\end{dfn}

\begin{prop} \label{prop:push} 
Given a regular $\xi$-fibration $\pi^\xi\colon \Gg \to \Gg_\sim$, there is a $\Z[\La]$-module homomorphism between the associated (multiplicative) structure algebras
\[
\pi_{*}^\xi\colon \SA_m(\Gg)\ra \SA_m(\Gg_\sim) \;\text{ defined by }\; (z_y)_{y\in V}\mapsto \Big(\sum_{y\in[v]}\frac{z_y}{\xi_y\big(\prod_{\be\in L_{[e]}} x_{\be}\big)}
\Big)_{[v]\in V_\sim},
\] 
where $z_y\in \Z[\La]$ and $x_\be=1-e^{-\be} \in \Z[\La]$.
\end{prop}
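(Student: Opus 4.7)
The plan is to verify three things in order: integrality of each component of $\pi_*^\xi(z)$, the edge compatibility in $\SA_m(\Gg_\sim)$, and $\Z[\La]$-linearity. Write $D := \prod_{\be \in L_{[e]}} x_\be$, so the candidate $[v]$-component is $\sum_{y \in [v]} z_y / \xi_y(D)$, which a priori lives only in the fraction field of $\Z[\La]$.

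The integrality claim is the heart of the matter. First I would use (CF1) to rewrite $\xi_y(D) = u_y \prod_{\ga \in L_{[v]}} x_\ga$ with $u_y$ a unit in $\Z[\La]$ of the form $(-1)^{n_y} e^{\mu_y}$; this uses the identity $x_{-\ga} = -e^\ga x_\ga$, together with the fact that $\xi_y$ bijects $L_{[e]}$ onto a signed copy of $L_{[v]}$. Regularity makes the $x_\ga$ pairwise coprime irreducibles in the UFD $\Z[\La]$, so it suffices to check that the $x_\ga$-residue of the sum vanishes for each $\ga \in L_{[v]}$. Grouping the vertices of $[v]$ by the matching of edges of $\Gg_{[v]}$ of label $\ga$, I would treat each pair $y \to y'$ as follows: (CF2a) forces $n_y$ and $n_{y'}$ to have opposite parity, so $u_y$ and $u_{y'}$ carry opposite signs; (CF2b) forces $\mu_y - \mu_{y'} \in \ga\Z$, and combined with the observation $e^{k\ga} - 1 \in x_\ga \Z[\La]$ this yields $u_y^{-1} + u_{y'}^{-1} \in x_\ga \Z[\La]$. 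Together with the edge relation $z_y - z_{y'} \in x_\ga \Z[\La]$ in $\SA_m(\Gg)$, the pair's numerator $z_y u_y^{-1} + z_{y'} u_{y'}^{-1}$ is divisible by $x_\ga$, cancelling the pole.

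For the edge compatibility, fix $[v] \to [w] \in E_\sim$ with label $\ga$. By (EQV2) there is a bijection $y \mapsto y^*$ between $[v]$ and $[w]$ with every $y \to y^*$ in $E$ labelled by $\ga$. Factoring each summand's numerator as
\[
z_y \xi_{y^*}(D) - z_{y^*} \xi_y(D) = (z_y - z_{y^*})\,\xi_{y^*}(D) + z_{y^*}\bigl(\xi_{y^*}(D) - \xi_y(D)\bigr),
\]
both pieces lie in $x_\ga \Z[\La]$: the first by the edge relation for $\SA_m(\Gg)$, the second because the $\Gg$-monodromy condition of Definition~\ref{dfn:gmond} gives $\xi_y(\la) - \xi_{y^*}(\la) \in \ga\Z$, which promotes to $\xi_y(D) - \xi_{y^*}(D) \in x_\ga \Z[\La]$ again via $e^{k\ga} - 1 \in x_\ga \Z[\La]$. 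Hence the total difference lies in $x_\ga$ times the fraction field of $\Z[\La]$; combined with the integrality from the previous step and the fact that $x_\ga$ is prime in the UFD $\Z[\La]$, it lies in $x_\ga \Z[\La]$, which is exactly the required edge relation in $\SA_m(\Gg_\sim)$. The $\Z[\La]$-linearity is immediate from the formula.

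The main obstacle is the integrality step: the delicate cancellation $u_y^{-1} + u_{y'}^{-1} \in x_\ga \Z[\La]$ forces the simultaneous use of (CF1), (CF2a) and (CF2b), and clarifies precisely why Definition~\ref{dfn:fibcompatible} was formulated as it is. One also needs the implicit observation that in a regular $\xi$-fibration the fibre edges labelled by a fixed $\ga \in L_{[v]}$ pair up the vertices of $[v]$, which follows from the constancy $L_{v,\sim} = L_{[v]}$ together with the moment graph axioms (MG2)--(MG3).
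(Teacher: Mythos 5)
Your proof follows the same two-step structure as the paper's: integrality of each component is shown by pairing the fibre vertices under the involution $j_\gamma$ and combining (CF1), (CF2a), (CF2b) with regularity, and the quotient-edge relation is checked via the (EQV2) bijection between fibres together with the monodromy condition of Definition~\ref{dfn:gmond}, with exactly the same decomposition of the cross-numerator. The only cosmetic difference is that you phrase the denominator-clearing step by appealing to $\Z[\La]$ being a UFD with pairwise coprime primes $x_\gamma$, while the paper argues directly from the two axiomatic divisibility conditions in the definition of a regular $\xi$-fibration.
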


\begin{proof}
We have to show that 
\begin{enumerate}
\item $\tilde{z}_{[v]}:=\sum_{y\in[v]}\frac{z_y}{\xi_y(\prod_{\be\in L_{[e]}}x_\be)}\in \Z[\La]$, and
\item $\tilde{z}_{[v]}-\tilde{z}_{[w]}\in x_{l([v]\to [w])}\Z[\La]$ for any $[v]\to [w]\in E_\sim$.
\end{enumerate}

(1): Set $\be=f_l^{[v],[e]}(\ga)\in L_{{e}}$ for $\ga \in L_{[v]}$. Then
\[
N_y^\xi=\{\be\in L_{[e]}\mid \xi_y(\be)\in -L_{[v]}\}.
\]
Since $x_{-\ga}=x_\ga(-e^\ga)$, by Definitions~\ref{dfn:fibration} and~\ref{dfn:fibcompatible} we obtain 
\[
\xi_y(\prod_{\be\in L_{[e]}}x_\be)=\Big(\prod_{\ga\in L_{[v]}}x_\ga\Big)\prod_{\be\in N_y^\xi}(-e^{-\xi_y(\be)}).
\]
Therefore, it reduces to check that
\[
\tfrac{1}{\prod_{\ga\in L_{[v]}}x_\ga}\sum_{y\in [v]}\Big(\sgn(y)z_y \prod_{\be\in N_y^\xi}e^{-\xi_y(\be)}\Big)\in \Z[\La],
\]
where we set $\sgn(y):=(-1)^{\#N_y^\xi}$. Observe also that Definition~\ref{dfn:fibcompatible} implies that if $y,y'\in[v]$ and $y\to y'\in E$ then $\sgn(y)=-\sgn(y')$.

Since the fibration is regular, this is equivalent to 
\[
x_\ga\mid \sum_{y\in [v]}\Big(\sgn(y)z_y \prod_{\be\in N_y^\xi}e^{-\xi_y(\be)}\Big), \quad \text{ for any }\ga\in L_{[v]}.
\]

Each label $\ga\in L_{[v]}$ induces an automorphism $j_\ga$ on the set of vertices of $\Gg_{[v]}$ by mapping $y\in [v]$ to the unique vertex $j_\ga(y)\in [v]$ connected to $y$ via an edge labelled by $\ga$. So we obtain
\begin{align*}
&\sum_{y\in [v]}\Big(\sgn(y)z_y \prod_{\be\in N_y^\xi}e^{-\xi_y(\be)}\Big)\\ &=\sum_{\substack{y\in [v]\\
y<j_\ga(y)}}\sgn(y)\Big(z_y \prod_{\be\in N_y^\xi}e^{-\xi_y(\be)}-z_{j_\ga(y)} \prod_{\de\in N_{j_\ga(y)}^\xi}e^{-\xi_{j_\ga(y)}(\de)}\Big)\\
&\equiv\sum_{\substack{y\in [v]\\
y<j_\ga(y)}}\sgn(y)z_y\Big(\prod_{\be\in N_y^\xi}e^{-\xi_y(\be)}-\prod_{\de\in N_{j_\ga(y)}^\xi}e^{-\xi_{j_\ga(y)}(\de)}\Big)\mod x_\ga\\
&=\sum_{\substack{y\in [v]\\
y<j_\ga(y)}}\sgn(y)z_y\prod_{\be\in N_y^\xi}e^{-\xi_y(\be)}\Big(1-e^{-\sum_{\de\in N_{j_\ga(y)}^\xi}\xi_{j_\ga(y)}(\de)+\sum_{\be\in N_y^\xi}\xi_y(\be)}\Big).
\end{align*}
By (CF2b), there exists an integer $m\in \Z$ such that
\[
\Big(1-e^{-\sum_{\de\in N_{j_\ga(y)}^\xi}\xi_{j_\ga(y)}(\de)+\sum_{\be\in N_y^\xi}\xi_y(\be)}\Big)=1-e^{-m\ga},
\]
and, therefore, $x_\ga=1-e^{-\ga}$ divides the latter.

(2): Assume now that $[v]\to [w]\in E_\sim$ with $l_\sim([v]\to [y])=\al$ and consider 
\[
\tilde{z}_{[v]}-\tilde{z}_{[w]}=\sum_{y\in [v]}\tfrac{z_y}{\xi_y(\prod_{\be\in L_{[e]}}x_\be)}-\sum_{u\in [w]}\tfrac{z_u}{\xi_u(\prod_{\be\in L_{[e]}}x_\be)}.
\]
Since `$\sim$' is $\Gg$-compatible, there is a similar bijection $j_\al$ between the vertices of $\Gg_{[v]}$ and $\Gg_{[w]}$. We now have to show that
\[
\sum_{y\in [v]}\Big(\tfrac{z_y}{\xi_y(\prod_{\be\in L_{[e]}}x_\be)}-\tfrac{z_{j_\al(y)}}{\xi_{j_\al(y)}(\prod_{\be\in L_{[e]}}x_\be)}\Big)\in x_\al \Z[\La].
\]
The independence of labels implies that $\al\not\in L_{[y]}\cup L_{[j_\al(y)]}$. Hence, we are reduced to show that
\[
z_y\xi_{j_\al(y)}(\prod_{\be\in L_{[e]}}x_\be)-z_{j_\al(u)}\xi_{y}(\prod_{\be\in L_{[e]}}x_\be)\in x_\al \Z[\La].
\]
The latter follows since  $\xi_{j_\al(y)}(\prod_{\be\in L_{[e]}}x_\be)-\xi_{y}(\prod_{\be\in L_{[e]}}x_\be)\in x_\al \Z[\La]$ by Definition~\ref{dfn:gmond}, and $z_y-z_{j_\al(y)}\in x_\al \Z[\La]$ as $z\in \SA_m(\Gg)$.
\end{proof}

Replacing $x_\ga$ by $\ga$ and $\Z[\La]$ by $S^*(\La)$ in Definition~\ref{dfn:regf} and in Proposition~\ref{prop:push} the same proof (where $e^\ga$ is replaced by $1$) gives the similar result for the structure algebras $\SA_a$ associated to $S^*(\La)$:

\begin{cor}
Given a regular $\xi$-fibration $\pi^\xi\colon \Gg \to \Gg_\sim$, there is a $S^*(\La)$-module homomorphism between the associated (additive) structure algebras
\[
\pi_{*}^\xi\colon \SA_a(\Gg)\ra \SA_a(\Gg_\sim) \;\text{ defined by }\; (z_y)_{y\in V}\mapsto \Big(\sum_{y\in[v]}\frac{z_y}{\xi_y\big(\prod_{\be\in L_{[e]}} \be\big)}
\Big)_{[v]\in V_\sim},
\] 
where $z_y\in S^*(\La)$.
\end{cor}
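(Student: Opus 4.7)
The strategy is to mirror the proof of Proposition~\ref{prop:push} term-by-term under the substitutions $x_\ga\mapsto\ga$, $\Z[\La]\mapsto S^*(\La)$, and $e^{-\la}\mapsto 1$. Each identity used in the multiplicative argument has an additive counterpart, and in fact the additive case is strictly simpler because the residual exponential corrections in $\Z[\La]$ collapse to constants in $S^*(\La)$.

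\textbf{Integrality.} Setting $\tilde z_{[v]}:=\sum_{y\in[v]}z_y/\xi_y(\prod_{\be\in L_{[e]}}\be)$, I would first verify that $\tilde z_{[v]}\in S^*(\La)$. By (CF1) each $\xi_y(\be)$ for $\be\in L_{[e]}$ lies in $\pm L_{[v]}$, with the minus sign occurring precisely when $\be\in N_y^\xi$, so
\[
\xi_y\Big(\prod_{\be\in L_{[e]}}\be\Big)=\sgn(y)\prod_{\ga\in L_{[v]}}\ga,\qquad \sgn(y):=(-1)^{\#N_y^\xi}.
\]
The additive regularity hypothesis makes each $\ga\in L_{[v]}$ irreducible in $S^*(\La)$ and distinct labels coprime, so the claim reduces to showing $\ga\mid\sum_{y\in[v]}\sgn(y)\,z_y$ for every $\ga\in L_{[v]}$. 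Let $j_\ga$ be the involution on $[v]$ that swaps the endpoints of $\ga$-labelled edges; property (CF2a) gives $\sgn(j_\ga(y))=-\sgn(y)$, so regrouping by $j_\ga$-orbits yields $\sum_{y<j_\ga(y)}\sgn(y)(z_y-z_{j_\ga(y)})$, each summand being divisible by $\ga$ since $z\in\SA_a(\Gg)$.

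\textbf{Compatibility across edges.} For $[v]\to[w]\in E_\sim$ with label $\al$, the $\Gg$-compatibility of $\sim$ supplies a bijection $j_\al\colon[v]\to[w]$ with $y\to j_\al(y)\in E$ and $l(y\to j_\al(y))=\al$. Independence of labels gives $\al\notin L_{[v]}\cup L_{[w]}$, so combining $\tilde z_{[v]}-\tilde z_{[w]}$ over a common denominator reduces the claim to
\[
z_y\,\xi_{j_\al(y)}\!\Big(\prod_{\be\in L_{[e]}}\be\Big)-z_{j_\al(y)}\,\xi_y\!\Big(\prod_{\be\in L_{[e]}}\be\Big)\in\al\,S^*(\La)
\]
for each $y\in[v]$. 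Adding and subtracting $z_y\,\xi_y(\prod_{\be\in L_{[e]}}\be)$ splits this into two pieces, each lying in $\al\,S^*(\La)$: one by the structure-algebra condition $z_y-z_{j_\al(y)}\in\al\,S^*(\La)$, the other by the monodromy relation~\eqref{eq:mon} extended from $\La$ to $S^*(\La)$ via the Leibniz-type identity $\xi_y(fg)-\xi_{y'}(fg)=(\xi_y(f)-\xi_{y'}(f))\xi_y(g)+\xi_{y'}(f)(\xi_y(g)-\xi_{y'}(g))$.

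\textbf{Main obstacle.} The subtlest step in the proof of Proposition~\ref{prop:push} was taming the residual exponential $1-e^{-m\ga}$, which required condition (CF2b). In the additive setting this quantity degenerates to $-m\ga$, which lies in $\ga\,S^*(\La)$ automatically; consequently (CF2b) is not needed and only the parity condition (CF2a) enters. Finally, $S^*(\La)$-linearity is immediate because the denominators $\xi_y(\prod_{\be\in L_{[e]}}\be)$ are independent of $z$ and the coordinate-wise $S^*(\La)$-action through the structure map commutes with summation.
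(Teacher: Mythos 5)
Your proposal correctly fleshes out the paper's one-line proof, which simply reads Proposition~\ref{prop:push} through the substitutions $x_\ga\mapsto\ga$, $\Z[\La]\mapsto S^*(\La)$, $e^\la\mapsto 1$. Your observation that under these substitutions (CF2b) becomes vacuous and only the parity condition (CF2a), which supplies $\sgn(j_\ga(y))=-\sgn(y)$, is actually used in the integrality step is accurate and consistent with the paper's intent.
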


\begin{cor}[Projection formula]\label{cor:projform}
In the hypothesis of Proposition~\ref{prop:push} we have 
\[
\pi_\ast^\xi((\pi^{\id_\La})^\ast(z')\cdot z)=z'\cdot \pi_\ast^\xi(z),\quad z'\in \SA_m(\Gg_\sim),\; z\in \SA_m(\Gg).
\]
In other words, the push-forward $\pi_\ast^\xi$ is a homomorphism of $\SA_m(\Gg_\sim)$-modules.
\end{cor}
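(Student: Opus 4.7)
The plan is to verify the equality componentwise over $V_\sim$ by a direct unfolding of definitions; no serious estimate or combinatorial identity is needed because the pull-back along the quotient map is constant on each fibre, which lets the factor $z'$ pass freely through the fibrewise summation defining $\pi_\ast^\xi$.

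First I would check that the operator $(\pi^{\id_\La})^\ast$ on the right is actually defined. Taking $\xi=\{\id_\La\}_{v\in V}$ as the $\Gg$-monodromy in Lemma~\ref{lem:pullb}, condition~\eqref{eq:mon} is trivial, and if $v\edg w\in E$ with $\pi_V(v)\neq \pi_V(w)$, then by (Q4) we have $l_\sim(\pi_E(v\edg w))=l(v'\edg w')$ for an edge $v'\edg w'\in E$ with $v'\sim v$, $w'\sim w$; using the $\Gg$-compatibility of $\sim$ (condition (EQV2)) one sees that $l(v'\edg w')$ and $l(v\edg w)$ differ only by a sign, so the hypothesis $\id_\La(l_\sim(\pi_E(v\edg w)))\in l(v\edg w)\Z$ is met.

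By Lemma~\ref{lem:pullb}, for $z'=(z'_{[v]})_{[v]\in V_\sim}\in\SA_m(\Gg_\sim)$ the pull-back $(\pi^{\id_\La})^\ast(z')\in\SA_m(\Gg)$ has $y$-component equal to $z'_{[y]}$; in particular it is constant on each equivalence class. Hence the $[v]$-component of $\pi_\ast^\xi\big((\pi^{\id_\La})^\ast(z')\cdot z\big)$ reads
\[
\sum_{y\in[v]}\frac{z'_{[y]}\cdot z_y}{\xi_y\bigl(\prod_{\be\in L_{[e]}}x_\be\bigr)}
=z'_{[v]}\sum_{y\in[v]}\frac{z_y}{\xi_y\bigl(\prod_{\be\in L_{[e]}}x_\be\bigr)}
=z'_{[v]}\cdot \pi_\ast^\xi(z)_{[v]},
\]
which is precisely the $[v]$-component of $z'\cdot\pi_\ast^\xi(z)$. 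Since this holds for every $[v]\in V_\sim$, the projection formula follows.

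There is no real obstacle: the only subtlety is ensuring that $(\pi^{\id_\La})^\ast$ falls under the scope of Lemma~\ref{lem:pullb}, after which the statement reduces to the elementary fact that $z'_{[y]}$ does not depend on the choice of representative $y\in[v]$ and can therefore be factored out of the sum.
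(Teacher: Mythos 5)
Your proof is correct and follows essentially the same route as the paper's: unfold the definition of $\pi_\ast^\xi$ componentwise, observe that $(\pi^{\id_\La})^\ast(z')$ has $y$-component $z'_{[y]}=z'_{[v]}$ constant on the fibre $[v]$, and factor it out of the sum. The paper's proof is just the final computation; you additionally spell out why $(\pi^{\id_\La})^\ast$ falls under Lemma~\ref{lem:pullb} (a minor quibble: by (Q4) and (EQV2) the labels $l_\sim(\pi_E(v\edg w))$ and $l(v\edg w)$ are in fact equal, not merely equal up to sign, but either way the divisibility hypothesis holds), which is a reasonable and harmless supplement.
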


\begin{proof} 
Let $z'=(z'_{[v]})_{[v]\in V_\sim}$ and $z=(z_v)_{v\in V}$. By definition, we have
\begin{align*}
\pi_\ast^\xi((\pi^{\id_\La})^\ast(z')\cdot z)&=\pi_\ast^\xi((z'_{[v]}z_v)_{v\in V})=\Big(\sum_{y\in[v]}\frac{z'_{[v]}z_y}{\xi_y\big(\prod_{\be\in L_{[e]}} x_{\be}\big)}\Big)_{[v]\in V_\sim} \\
&=\Big(z'_{[v]}\sum_{y\in[v]}\frac{z_y}{\xi_y\big(\prod_{\be\in L_{[e]}} x_{\be}\big)}\Big)_{[v]\in V_\sim}=(z'_{[v]} \tilde{z}_{[v]})_{[v]\in V_\sim}. \qedhere
\end{align*}
\end{proof}

\begin{ex}\label{ex:fibK}
Consider the fibration $\pi\colon \Gg\to \Gg_\sim$ of Example~\ref{ex:gcomf}, where $\Gg=\Gg(W)$ is the Bruhat graph, $\Gg_\sim=\Gg(W^\Th)$ is the parabolic Bruhat moment graph, $e$ is the neutral element of $W$, the moment graph isomorphism $f^{[v],[e]}\colon \Gg_{[v]}\to\Gg_{[e]}$ are given by $f^{[v],[e]}_V(w)=v^{-1}w$ for any $w\in vW_\Th$ and $f^{[v],[e]}_l(\la)=v^{-1}(\la)$, $\la\in \La$, the automorphisms $\xi=\{\xi_y\}_y$ are given by $\xi_y=y(\la)$, $\la\in \La$. 

Then we are in the hypotheses of the proposition and the induced homomorphism of structure algebras coincides with the classical push-forward map on $K$-theory $\pi_\ast\colon K(G/B) \to K(G/P_\Th)$ induced by the canonical quotient map $G/B\to G/P_\Th$ (see e.g. \cite[(3.3)]{GR}).
\end{ex}

\begin{ex} 
Consider the fibration of Example~\ref{ex:specialmatching2}. Suppose that the multi-sets $L_{[v]}$ for $\Gg_\sim$ consist of linearly independent labels. Suppose also that we have a collection $\xi=\{\xi_y\}$ of automorphisms of $\La$ satisfying~\eqref{eqn:specialmatching3}. Then we are in the hypothesis of the proposition and there is the induced homomorphism of $\Z[\La]$-modules $\pi_\ast^\xi\colon \SA_m(\Gg) \to \SA_m(\Gg_\sim)$.

Combining it with the induced pull-back we obtain the group homomorphism
\[
(\pi^{\id_\La})^\ast \circ \pi^\xi_\ast \colon \SA_m(\Gg) \ra \SA_m(\Gg)
\]
which we call the \emph{push-pull operator} on the moment graph $\Gg$.

In the case of a special matching of the Weyl group induced by right multiplication by a simple reflection $s$ and $\xi$ taken from Example~\ref{ex:xicharmap}, the above composition will give the classical divided difference operator. 
\end{ex}


\section{The Chern character and the Riemann-Roch Theorem}\label{sec:chern}

In the present section we introduce the Chern character between structure algebras of moment graphs. We study its properties with respect to characteristic maps, pull-backs and forgetful maps. We state and prove our main result -- the analogue of the Riemann-Roch theorem for moment graphs.

\subsection{Truncated Chern character}
Consider a map
\[
\ch_i\colon \Z[\La] \to S^{\le i}_\Q(\La):=S^*(\La)/I_a^{i+1}\otimes_{\Z}\Q
\]
defined by taking the truncated exponential series
\[
\ch_i(e^{\la}) \mapsto \exp(\la)=\sum_{0\le j\le i}\tfrac{1}{j!}\la^j, \quad \la \in \La.
\]
It is a ring homomorphism which we  call a (truncated) Chern character.

Observe that under this map 
\[
\ch_i(x_\la)=ch_i(1-e^{-\la})=\sum_{1\le j\le i}\tfrac{(-1)^{j+1}}{j!}\la^j \in \la S^{\le i}_\Q(\La)
\] 
so $\ch_i(I_m^j) \subset I_a^j$ for all $j\le i$ and, therefore, there is an induced graded ring homomorphism 
\[
\ch_i\colon gr_m^{\le i}(\La) \to gr_a^{\le i}(\La)_\Q
\]
which becomes an isomorphism after tensoring the left hand side with $\Q$. 

Since $\ch_i(x_\la)\in \la S^{\le i}_\Q(\La)$, $ch_i$ preserves the relations in the definition of the structure algebra. So we obtain

\begin{prop}\label{thm:main}
Let $\Gg=\big((V,\le), l\colon E\to \Ln\big)$ be a moment graph. Then the direct sum 
\[
\oplus_{v\in V} \ch_i \colon \oplus_{v\in V}\Z[\La] \to\oplus_{v\in V}S^{\le i}_\Q(\La)
\]
of the maps $\ch_i$ restricts to a ring homomorphism (called the localized Chern character)
\[
\cch_i\colon \SA_{m}(\Gg) \to \SA_{a}^{\le i}(\Gg)_\Q,
\]
where the latter is the truncated structure algebra with $\Q$-coefficients.
\end{prop}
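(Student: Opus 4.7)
\medskip

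\noindent\textbf{Proof proposal.} The plan is to verify three things in sequence: that the coordinate-wise map $\bigoplus_v \ch_i$ on $\bigoplus_v \Z[\La]$ is a ring homomorphism to $\bigoplus_v S^{\le i}_\Q(\La)$, that it sends tuples satisfying the edge condition for $\SA_m(\Gg)$ to tuples satisfying the edge condition for $\SA_a^{\le i}(\Gg)_\Q$, and finally that it lands in the truncated structure algebra.

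First I would check that each individual map $\ch_i\colon \Z[\La]\to S^{\le i}_\Q(\La)$ is a ring homomorphism. Additivity is immediate from linearity of the truncated exponential series on a basis. For multiplicativity, the identity $\exp(\la)\exp(\mu)=\exp(\la+\mu)$ in the formal power series ring $\Q[[\La]]$ yields, after truncation modulo $I_a^{i+1}$, the equality $\ch_i(e^\la)\ch_i(e^\mu)=\ch_i(e^{\la+\mu})$; since the classes $e^\la$ additively generate $\Z[\La]$, this extends to a ring homomorphism by $\Z$-bilinearity. Taking direct sums, $\bigoplus_v\ch_i\colon\bigoplus_v\Z[\La]\to\bigoplus_vS^{\le i}_\Q(\La)$ is likewise a ring homomorphism for the coordinate-wise product. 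Since the multiplication on both structure algebras is defined coordinate-wise, the restriction (once shown to be well-defined) will automatically be a ring homomorphism.

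Next I would verify the crucial divisibility observation already recorded in the text: for $\la\in\La_\emptyset$,
\[
\ch_i(x_\la)=\ch_i(1-e^{-\la})=\sum_{1\le j\le i}\tfrac{(-1)^{j+1}}{j!}\la^j=\la\cdot\Bigl(\sum_{1\le j\le i}\tfrac{(-1)^{j+1}}{j!}\la^{j-1}\Bigr)\in\la\, S^{\le i}_\Q(\La).
\]
Now take $z=(z_v)_{v\in V}\in\SA_m(\Gg)$ and an edge $v\to w\in E$ with label $\la:=l(v\to w)$. By definition there exists $g\in\Z[\La]$ with $z_v-z_w=x_\la\cdot g$. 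Applying $\ch_i$ and using that it is a ring homomorphism gives
\[
\ch_i(z_v)-\ch_i(z_w)=\ch_i(x_\la)\cdot\ch_i(g)\in \la\, S^{\le i}_\Q(\La),
\]
by the displayed divisibility. This is precisely the edge relation defining $\SA_a^{\le i}(\Gg)_\Q$, so $(\ch_i(z_v))_{v\in V}\in\SA_a^{\le i}(\Gg)_\Q$. Combined with the first step, this yields the desired ring homomorphism $\cch_i\colon\SA_m(\Gg)\to\SA_a^{\le i}(\Gg)_\Q$.

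There isn't really a substantial obstacle here: once the two preparatory facts (ring homomorphism property of $\ch_i$ and the inclusion $\ch_i(x_\la)\in\la S^{\le i}_\Q(\La)$) are in hand, the edge compatibility is a one-line consequence. The only point worth being careful about is ensuring the truncation level is respected on both sides; this is automatic because $\ch_i(I_m^{j})\subset I_a^{j}$ for $j\le i$, as observed just before the statement, so the map descends correctly modulo $I_a^{i+1}$.
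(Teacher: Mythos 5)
Your argument is correct and is essentially the same one the paper gives: the paper observes $\ch_i(x_\la)\in\la S^{\le i}_\Q(\La)$ and remarks that this means $\ch_i$ preserves the edge relations defining the structure algebra, which is precisely your key step. You have simply filled in the routine verification that $\ch_i$ is a ring homomorphism and spelled out the one-line deduction $\ch_i(z_v)-\ch_i(z_w)=\ch_i(x_\la)\ch_i(g)$, which the paper leaves implicit.
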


\subsection{Forgetful, characteristic maps and pull-backs}
We now show that the localized Chern character commutes with  characteristic, forgetful maps and pull-backs. Our first observation is the following

\begin{lem}\label{lem:charm}
The localized Chern character on the structure algebras of $\Gg$ respects the $\xi$-characteristic map, i.e.
\[
c^\xi \circ \ch_i =\cch_i\circ c^\xi \colon S_m \to \SA_a^{\le i}(\Gg)_\Q.
\] 
\end{lem}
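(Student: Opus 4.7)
The plan is to verify the identity on the multiplicative generators $e^{\la}$ of $S_m=\Z[\La]$ and then extend by the ring homomorphism property, since both $c^\xi$, $\ch_i$ and $\cch_i$ are ring homomorphisms.

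First I would unwind the definitions. By construction, each $\xi_v$ is a $\Z$-linear automorphism of $\La$; the same symbol denotes its induced ring automorphisms of $\Z[\La]$ and of $S^*(\La)$, determined on generators by $\xi_v(e^{\la})=e^{\xi_v(\la)}$ and $\xi_v(\la)$ respectively. Consequently, $\xi_v$ preserves the $I_a$-adic filtration on $S^*(\La)_\Q$ and therefore descends to $S^{\le i}_\Q(\La)$. The $\xi$-characteristic map $c^\xi$ is then the diagonal embedding twisted by $(\xi_v)_{v\in V}$, acting identically on both the $S_m$ and $S_a^{\le i}(\Gg)_\Q$ side.

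Next, on a generator $\la\in\La$, I would simply compute both compositions. On the one hand,
\[
\bigl(c^\xi\circ \ch_i\bigr)(e^{\la})=\Bigl(\xi_v\bigl(\textstyle\sum_{0\le j\le i}\tfrac{1}{j!}\la^{j}\bigr)\Bigr)_{v\in V}=\Bigl(\textstyle\sum_{0\le j\le i}\tfrac{1}{j!}\xi_v(\la)^{j}\Bigr)_{v\in V},
\]
using that $\xi_v$ is a ring homomorphism on $S^*(\La)_\Q$. On the other hand,
\[
\bigl(\cch_i\circ c^\xi\bigr)(e^{\la})=\cch_i\bigl((e^{\xi_v(\la)})_{v\in V}\bigr)=\bigl(\ch_i(e^{\xi_v(\la)})\bigr)_{v\in V}=\Bigl(\textstyle\sum_{0\le j\le i}\tfrac{1}{j!}\xi_v(\la)^{j}\Bigr)_{v\in V}.
\]
The two expressions coincide coordinate-wise, which gives the required equality on $e^{\la}$.

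Finally, since $\Z[\La]$ is generated as a ring by the exponentials $e^{\la}$, $\la\in\La$, and both compositions are ring homomorphisms (being composites of ring homomorphisms), the equality on generators propagates to all of $S_m$. There is no real obstacle here: the essential content is simply that the exponential series is functorial in the lattice and that $\xi_v$ acts through its action on $\La$ in a way that is manifestly compatible with the truncated exponential. The argument does not use any special property of the graph $\Gg$ beyond the fact that $\xi=\{\xi_v\}_{v\in V}$ is a $\Gg$-monodromy, which is exactly what guarantees that $c^\xi$ takes values in the respective structure algebras to begin with.
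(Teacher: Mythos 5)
Your proof is correct and amounts to spelling out in coordinates the functoriality that the paper invokes in one line: since $\xi_v$ acts through its action on $\La$, it commutes with the truncated exponential $\ch_i$, and checking this on the generators $e^\la$ and extending by multiplicativity is exactly the concrete form of that functoriality. The only cosmetic point is that the middle paragraph about $\xi_v$ preserving the $I_a$-adic filtration and the remark that the $\Gg$-monodromy hypothesis is what places $c^\xi$ in the structure algebra are both correct and worth keeping, as they justify that the two compositions in the statement are even well-defined.
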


\begin{proof}
As both functor $S^*(-)$ and $\Z[-]$ and $\ch_i$ are functorial with respect to automorphisms $\xi_x$ of the lattice $\La$, the lemma follows.
\end{proof}

As in \cite{LZ19} we denote by $\widetilde{\SA}(\Gg)$ the quotient of $\SA(\Gg)$ modulo the ideal $I\SA(\Gg)$ (here $\SA(\Gg)$ is viewed as a $S$-module) and call it the augmented structure algebra. The quotient  map $\rho \colon \SA(\Gg) \to \widetilde{\SA}(\Gg)$ is called the forgetful map. We have

\begin{lem}\label{lem:forgt} 
The localized Chern character on structure algebras respects the forgetful map, i.e.
\[
\rho \circ \cch_i=\widetilde{\cch}_i\circ \rho \colon \SA_m(\Gg) \to \widetilde{\SA}_a^{\le i}(\Gg)_\Q,
\]
where $\widetilde{\cch}_i$ denotes the restricted class.
\end{lem}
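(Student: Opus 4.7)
The plan is to show that $\cch_i$ descends to a well-defined homomorphism $\widetilde{\cch}_i\colon \widetilde{\SA}_m(\Gg)\to \widetilde{\SA}_a^{\le i}(\Gg)_\Q$ between the augmented structure algebras; once this is established, the equality $\rho\circ\cch_i=\widetilde{\cch}_i\circ\rho$ is forced by the universal property of the quotient, and nothing further needs to be checked.

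First, I would unfold the definitions following the convention set in the paragraph preceding the lemma: the augmented algebras are the quotients $\widetilde{\SA}_m(\Gg)=\SA_m(\Gg)/I_m\SA_m(\Gg)$ and $\widetilde{\SA}_a^{\le i}(\Gg)_\Q=\SA_a^{\le i}(\Gg)_\Q/I_a\SA_a^{\le i}(\Gg)_\Q$, with the module structures supplied by the trivial-monodromy structure (characteristic) maps $c\colon \Z[\La]\to \SA_m(\Gg)$ and $c\colon S^*(\La)\to \SA_a^{\le i}(\Gg)_\Q$. The lemma therefore reduces to verifying the inclusion
\[
\cch_i\bigl(I_m\SA_m(\Gg)\bigr)\subset I_a\SA_a^{\le i}(\Gg)_\Q.
\]

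Second, I would invoke Lemma~\ref{lem:charm} applied to the trivial monodromy $\xi=\{\id_\La\}_{v\in V}$; this produces the compatibility $\cch_i\circ c=c\circ \ch_i$, which expresses that $\cch_i$ is linear along the ring homomorphism $\ch_i\colon \Z[\La]\to S_\Q^{\le i}(\La)$. Combining this with the inclusion $\ch_i(I_m)\subset I_a$ already recorded in the paragraph preceding Proposition~\ref{thm:main} (itself an immediate consequence of the formula $\ch_i(x_\la)=\sum_{j=1}^{i}\tfrac{(-1)^{j+1}}{j!}\la^j\in \la S_\Q^{\le i}(\La)$), the desired containment is instantaneous: a typical generator $c(x)\cdot w$ with $x\in I_m$ and $w\in \SA_m(\Gg)$ is sent to $c(\ch_i(x))\cdot\cch_i(w)$, which lies in $I_a\SA_a^{\le i}(\Gg)_\Q$ because $\ch_i(x)\in I_a$.

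Finally, the universal property of the quotient yields a unique homomorphism $\widetilde{\cch}_i$ with $\widetilde{\cch}_i\circ\rho=\rho\circ\cch_i$, which is precisely the asserted identity. This lemma is essentially a bookkeeping compatibility statement, so no substantial obstacle arises; the only genuine input is the filtration compatibility $\ch_i(I_m)\subset I_a$, which is built into the definition of $\ch_i$.
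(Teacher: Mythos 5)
Your argument is correct and is essentially a fully unfolded version of the paper's one-line proof: the paper's statement that ``the Chern character preserves the augmentation ideal'' is exactly your observation $\ch_i(I_m)\subset I_a$, and the rest (reducing to $\cch_i(I_m\SA_m(\Gg))\subset I_a\SA_a^{\le i}(\Gg)_\Q$ via the structure-map compatibility and the quotient's universal property) is the routine bookkeeping the paper leaves implicit.
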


\begin{proof}
Since the Chern character preserves the augmentation ideal, the result follows.
\end{proof}

Finally, let $f\colon \Gg\to \Gg'$ be a morphism of oriented graphs which satisfies the hypothesis of Lemma~\ref{lem:pullb}. Consider the induced pull-back map on the structure algebras $f^{\xi\ast}\colon \SA(\Gg')\to\SA(\Gg)$. By definition, we then have

\begin{lem}\label{lem:cpull}
The localized Chern character respects the pull-backs, i.e. we have
\[
f^{\xi\ast}\circ \cch_i' = \cch_i \circ f^{\xi\ast},
\]
where $\cch_i'$ is the respective Chern character for the moment graph $\Gg'$.
\end{lem}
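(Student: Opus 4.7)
The plan is to unfold both compositions coordinate-wise and reduce the identity to the fact that the truncated Chern character is natural with respect to $\Z$-linear automorphisms of the lattice $\La$.

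First I would fix an element $z' = (z'_{v'})_{v'\in V'} \in \SA_m(\Gg')$ and compute both sides of the asserted identity at an arbitrary vertex $v\in V$. On one side, by Proposition~\ref{thm:main} and Lemma~\ref{lem:pullb},
\[
\bigl(f^{\xi\ast}(\cch_i'(z'))\bigr)_v = \xi_v\bigl(\ch_i(z'_{f_V(v)})\bigr),
\]
while on the other side
\[
\bigl(\cch_i(f^{\xi\ast}(z'))\bigr)_v = \ch_i\bigl(\xi_v(z'_{f_V(v)})\bigr).
\]
Thus the statement reduces to the vertex-wise naturality identity $\xi_v\circ \ch_i = \ch_i \circ \xi_v$ as maps $\Z[\La]\to S^{\le i}_\Q(\La)$.

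Next I would verify this naturality. Since any $\Z$-linear automorphism $\xi_v$ of $\La$ extends uniquely to a ring automorphism of $\Z[\La]$ sending $e^\la$ to $e^{\xi_v(\la)}$, and likewise to a ring automorphism of $S^*(\La)$ (and hence of $S^{\le i}_\Q(\La)$) sending $\la$ to $\xi_v(\la)$, we may compute on generators:
\[
\ch_i\bigl(\xi_v(e^\la)\bigr) = \ch_i\bigl(e^{\xi_v(\la)}\bigr) = \sum_{0\le j\le i}\tfrac{1}{j!}\xi_v(\la)^j = \xi_v\Bigl(\sum_{0\le j\le i}\tfrac{1}{j!}\la^j\Bigr) = \xi_v\bigl(\ch_i(e^\la)\bigr).
\]
Since both $\xi_v\circ \ch_i$ and $\ch_i\circ \xi_v$ are ring homomorphisms from $\Z[\La]$ to $S^{\le i}_\Q(\La)$ agreeing on the multiplicative generators $e^\la$, they coincide.

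There is no real obstacle here; the content is purely that $\ch_i$ is natural in the lattice, exactly as in Lemma~\ref{lem:charm}. Combining the two steps above yields the claim at every $v\in V$, hence the equality of ring homomorphisms $\SA_m(\Gg') \to \SA_a^{\le i}(\Gg)_\Q$.
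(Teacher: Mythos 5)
Your proof is correct and follows exactly the route the paper leaves implicit: the paper states the lemma with only the words ``by definition,'' relying on the same naturality of $\ch_i$ under lattice automorphisms that is invoked in Lemma~\ref{lem:charm}. Your coordinate-wise unwinding and verification on the generators $e^\la$ is precisely the content the authors are pointing to, just written out in full.
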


\subsection{The Riemann-Roch type theorem}

Suppose now we are in the hypotheses of Proposition~\ref{prop:push}, i.e. we are given a regular $\xi$-fibration $\pi^\xi\colon \Gg \to \Gg_\sim$ of moment graphs with a distinguished point $e\in V$. So that there is the induced push-forward $\pi_\ast^\xi\colon \SA(\Gg) \to \SA(\Gg_\sim)$. We then define the $\xi$-Todd genus of the fibration $\pi^\xi$ to be the truncation
\[
td^\xi_i(\Gg)=\big(\exp\big(\sum_{\be\in N_y^\xi}-\xi_y(\be)\big)\big)_{\! y} \in \SA_a^{\le i}(\Gg).
\]

We have the following Riemann-Roch type theorem for $\xi$-fibrations on moment graphs:
\begin{thm}\label{thm:mainRR}
For any $z\in \SA_m(\Gg)$ we have
\[
\pi^\xi_\ast \big(\cch_i(z)\cdot td_i^\xi(\Gg)\big)=\cch_i(\pi^\xi_\ast(z)).
\] 
\end{thm}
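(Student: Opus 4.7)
My plan is to verify the identity component by component in $\SA_a^{\le i}(\Gg_\sim)_\Q$. Fixing $[v]\in V_\sim$, I would need to show
\[
\ch_i\Big(\sum_{y\in[v]}\frac{z_y}{\xi_y(\prod_{\be\in L_{[e]}}x_\be)}\Big)=\sum_{y\in[v]}\frac{\ch_i(z_y)\cdot td^\xi_i(\Gg)_y}{\xi_y(\prod_{\be\in L_{[e]}}\be)}.
\]
First I would pass to a common localization of $\Z[\La]$ and $S^*(\La)_\Q$ (inverting the labels $\ga\in L_{[v]}$ and the corresponding elements $x_\ga$), so that each summand makes sense individually. Since $\ch_i$ is a ring homomorphism by Proposition~\ref{thm:main} and extends to the localization, the left-hand side splits as $\sum_{y\in[v]}\ch_i(z_y)/\ch_i(\xi_y(\prod_{\be}x_\be))$, and the whole identity reduces to the termwise statement
\[
\frac{1}{\ch_i(\xi_y(\prod_{\be}x_\be))}=\frac{td^\xi_i(\Gg)_y}{\xi_y(\prod_{\be}\be)}\qquad \text{for each }y\in[v].
\]

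The computational heart of the argument is the factorization established in the proof of Proposition~\ref{prop:push},
\[
\xi_y\Big(\prod_{\be\in L_{[e]}}x_\be\Big)=\prod_{\ga\in L_{[v]}}x_\ga\cdot\prod_{\be\in N_y^\xi}(-e^{-\xi_y(\be)}),
\]
together with the analogous sign bookkeeping $\xi_y(\prod_{\be}\be)=(-1)^{|N_y^\xi|}\prod_{\ga\in L_{[v]}}\ga$, both of which rely on (CF1) saying that $\xi_y\circ f^{[v],[e]}_l$ permutes $L_{[v]}$ up to sign, and on the identity $x_{-\ga}=-e^\ga x_\ga$. Applying $\ch_i$ to the first factorization rewrites $\ch_i(\xi_y(\prod_{\be}x_\be))$ as a product over $L_{[v]}$ of the terms $1-\exp(-\ga)$, twisted by the exponential factors $\prod_{\be\in N_y^\xi}(-\exp(-\xi_y(\be)))$. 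I would then match the ratio $\xi_y(\prod_{\be}\be)/\ch_i(\xi_y(\prod_{\be}x_\be))$ against the definition of $td^\xi_i(\Gg)_y$, so that the termwise identity becomes an algebraic relation involving only the truncated power series $\ga/(1-\exp(-\ga))$ and the exponential twists dictated by $N_y^\xi$.

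Finally I would argue that the termwise identity in the localization lifts back to an equality in $\SA_a^{\le i}(\Gg_\sim)_\Q$: both $\cch_i(\pi^\xi_\ast(z))$ and $\pi^\xi_\ast(\cch_i(z)\cdot td^\xi_i(\Gg))$ are \emph{a priori} elements of the structure algebra (the former by Proposition~\ref{thm:main} applied after Proposition~\ref{prop:push}, the latter by the additive analogue of Proposition~\ref{prop:push} combined with Corollary~\ref{cor:projform}), so coincidence in the localized ring forces coincidence already in $\SA_a^{\le i}(\Gg_\sim)_\Q$. I also expect to use Lemmas~\ref{lem:charm} and~\ref{lem:cpull} to replace the direct termwise check by a compatibility diagram between the fiber $\Gg_{[e]}$ and the fibers $\Gg_{[v]}$: after transporting via $f^{[v],[e]}$, it suffices to verify the identity on the model fiber, which makes the sign and exponential factors combine transparently. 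The main obstacle I anticipate is the bookkeeping in the termwise step: keeping track of the sign $(-1)^{|N_y^\xi|}$, the exponentials $\exp(-\xi_y(\be))$ coming from the conversion $x_{-\ga}=-e^\ga x_\ga$, and the Todd-type factors arising from $\ch_i(x_\ga)\in \ga\cdot(1+\ga S_\Q^{\le i-1}(\La))$, so that they assemble into exactly the element $td^\xi_i(\Gg)_y$; conditions (CF2a)–(CF2b), which guarantee that the individual summands of $\pi^\xi_\ast$ glue into the structure algebra, will be what secures this lifting step.
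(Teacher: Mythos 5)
Your strategy mirrors the paper's: reduce to the fibres $\Gg_{[v]}$, write out the push-forward as the explicit finite sum, apply $\cch_i$ using its multiplicativity, and exploit the factorization $\xi_y\big(\prod_{\be\in L_{[e]}}x_\be\big)=\prod_{\ga\in L_{[v]}}x_\ga\cdot\prod_{\be\in N_y^\xi}\big({-}e^{-\xi_y(\be)}\big)$ from the proof of Proposition~\ref{prop:push}. Where you diverge is precisely at the point you yourself flag as the ``main obstacle,'' and this is a genuine gap rather than routine bookkeeping. Follow your own plan: applying $\ch_i$ multiplicatively to that factorization gives
\[
\ch_i\Big(\xi_y\big(\prod_{\be\in L_{[e]}}x_\be\big)\Big)=\Big(\prod_{\ga\in L_{[v]}}\big(1-\exp(-\ga)\big)\Big)\cdot \sgn(y)\cdot\exp\Big({-}\sum_{\be\in N_y^\xi}\xi_y(\be)\Big),
\]
while on the other side $\xi_y\big(\prod_\be\be\big)=\sgn(y)\prod_{\ga\in L_{[v]}}\ga$. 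Your termwise target
\[
\tfrac{1}{\ch_i(\xi_y(\prod_\be x_\be))}=\tfrac{td^\xi_i(\Gg)_y}{\xi_y(\prod_\be\be)}
\]
therefore reduces to comparing $\prod_\ga\ga$ with $\prod_\ga(1-\exp(-\ga))$, and these differ by the $y$-independent unit $\prod_{\ga\in L_{[v]}}\tfrac{\ga}{1-\exp(-\ga)}$, which does \emph{not} appear in $td^\xi_i(\Gg)_y$ as defined. So the ``Todd-type factors'' you anticipate from $\ch_i(x_\ga)\in\ga\cdot(1+\ga S^{\le i-1}_\Q(\La))$ do not assemble into the stated Todd genus; the termwise identity fails already for $z=1$ on a two-vertex fibre.

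Be aware that this is not just a slip in your draft: the paper's own argument at this step asserts $\cch_i(x_\ga)=\ga$ and uses it to pull $1/\prod x_\ga$ through $\cch_i$ as $1/\prod\ga$, but $\cch_i(x_\ga)=1-\exp(-\ga)$, and the equality with $\ga$ holds only after passing to the associated graded (or truncating at $i=1$). Your more careful observation about $\ch_i(x_\ga)$ is the correct one, and it exposes that the statement needs either (a) the Todd genus to carry the additional factor $\prod_{\ga\in L_{[v]}}\tfrac{\ga}{1-\exp(-\ga)}$, in line with the classical Grothendieck--Riemann--Roch shape $\tfrac{c_1}{1-e^{-c_1}}$, or (b) a reinterpretation of $\cch_i$ as the induced map on associated graded pieces. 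Until one of these is pinned down, the termwise reduction cannot be closed as written, and the lifting step (your final paragraph) has nothing to lift, since the localized identity itself does not hold. I would therefore not rely on Lemmas~\ref{lem:charm} and~\ref{lem:cpull} to finesse this: transporting to the model fibre $\Gg_{[e]}$ does not change $\ch_i(x_\ga)$ and so cannot remove the discrepant factor.
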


\begin{proof}
It is enough to prove it on fibres $\Gg_{[v]}$, $[v]\in V_\sim$. Recall that the map $\pi^\xi_\ast$ is given on the fibre by 
\[
(z_y)_{y\in [v]} \mapsto \tilde z_{[v]}=\tfrac{1}{\prod_{\ga\in L_{[v]}}x_\ga}\sum_{y\in [v]}\Big(\sgn(y)z_y \prod_{\be\in N_y^\xi}e^{-\xi_y(\be)}\Big)
\]
Since $\cch_i$ is a ring homomorphism and $\cch_i(x_\ga)=\ga$, we obtain
\begin{align*}
\cch_i(\pi^\xi_\ast(z))_{[v]} 
&=  \tfrac{1}{\prod_{\ga\in L_{[v]}} \ga}\sum_{y\in [v]}\Big(\sgn(y)\cch_i(z_y) \cch_i\big(\prod_{\be\in N_y^\xi}e^{-\xi_y(\be)}\big)\Big)\\
&=\tfrac{1}{\prod_{\ga\in L_{[v]}} \ga}\sum_{y\in [v]}\Big(\sgn(y)\cch_i(z_y) \exp\big(\sum_{\be\in N_y^\xi}-\xi_y(\be)\big)\Big)\in S^*(\La).
\end{align*}
On the other side we obtain
\[
\pi^\xi_\ast \big(\cch_i(z)\big)_{[v]}=\tfrac{1}{\prod_{\ga\in L_{[v]}} \ga}\sum_{y\in [v]}\Big(\sgn(y)\cch_i(z_y) \Big)\in S^*(\La).
\]
The result then follows.
\end{proof}

\begin{rem}
The formula of Theorem~\ref{thm:mainRR} can be viewed as the moment graph analogue of~\cite[Corollary~2.5.5]{Pa04} where the $\xi$-Todd genus of the $\xi$-fibration corresponds to $td_{\cch}(T_X)$ of loc.cit.
\end{rem}

\section{The case of Kac-Moody flag varieties}\label{sec:KacM}

In the present section we show that all Examples~\ref{ex:Bruhat}, \ref{ex:quotBruhat}, \ref{ex:xicharmap}, \ref{ex:fibbr}, \ref{ex:gcomf} and, finally, \ref{ex:fibK} can be extended to the Kac-Moody settings (i.e., to possibly infinite Weyl groups), so that our main result Theorem~\ref{thm:mainRR} holds for the $T$-equivariant $K$-theory and Chow groups of Kac-Moody flag varieties. 

\subsection{Preliminaries and notation}
We introduce notation and list basic properties of the root datum associated to a generalized Cartan matrix. We follow closely \cite[(1.1) and (1.2)]{KK90}:

Let $A:=(a_{ij})_{1\le i,j\le l}$ be a generalized Cartan matrix (i.e., $a_{ii}=2$, $-a_{ij}\in \Z_+$ for all $i\neq j$, where $\Z_+$ is the set of nonnegative integers, and $a_{ij}=0$ $\Leftrightarrow$ $a_{ji}=0$). Choose a triple $(\Crt,\Pi,\Pi^\vee)$, unique up to isomorphism, where $\Crt$ is a complex vector space of dimension $2l-\mathrm{rk} A$, 
\[
 \Pi=\{\al_1,\ldots,\al_l\}\subset \Crt^\ast, \quad \Pi^\vee=\{h_1, \ldots,h_l\}\subset \Crt
\] 
are linearly independent sets satisfying $\al_j(h_i)=a_{ij}$. Such a triple we call the root datum corresponding to the generalized Cartan matrix $A$.
 
Let $\Lie=\Lie(A)$ be the Kac-Moody Lie algebra associated to $A$ as in \cite[\S1]{KK90}. So that $\Crt$ is the Cartan subalgebra of $\Lie$ and there is the root space decomposition
\[
\Lie=\Crt \oplus \sum_{\al\in \De_+}(\Lie_\al\oplus \Lie_{-\al}),\quad \Lie_\al=\{x\in \Lie\mid [h,x]=\al(h)x,\;\forall h\in \Crt\},
\]
where $\De_+=\{\al\in \sum_{i=1}^l \Z_+ \al_i\mid \al\neq 0\text{ and }\Lie_\al\neq 0\}$ is called the set of positive roots. Define $\De_-=-\De_+$ and call it the set of negative roots. Define $\De=\De_+\cup \De_-$ and call it the set of roots. The roots $\{\al_i\}_{1\le i\le l}$ are called the simple roots and the elements $h_i$, $1\le i\le l$ are called the simple coroots.

Associated to $(\Lie,\Crt)$ there is the Weyl group $W\subset \mathrm{Aut}(\Crt^\ast)$, generated by the simple reflections $s_i$, $1\le i\le l$, where
\[
s_i(\la)=\la - \la(h_i)\al_i,\quad \la\in \Crt^\ast.
\]
The group $W$ is the Coxeter group on generators $s_i$, $1\le i\le l$. For  $i\neq j$, the order $m_{ij}$ of $s_is_j$ equals to $2,3,4,6,\infty$ when $a_{ij}a_{ji}$ is $0,1,2,3,\ge 4$, respectively. We denote by $\leq$ the Bruhat order on $W$ and by $\ell:W\rightarrow\Z_+$ the length function. The Weyl group preserves $\De$.

Define  the subset of real roots to be 
\[
\Phi:=\{w (\al_i)\mid w\in W,\;  \al_i\in \Pi\}.
\] 
For any $\al=w(\al_i)\in \Phi$, the associated the reflection is $s_\al=ws_iw^{-1}$. Set $\Phi_+=\De_+\cap \Phi$ and $\Phi_-=\De_-\cap \Phi$.

For any $\Th \subset \Pi$ let $W_\Th$ be the subgroup of $W$ generated by $\{s_i\}_{\al_i\in \Th}$. Let $W^\Th$ denote the subset of minimal left coset representatives of $W/W_\Th$ (each coset contains a unique such representative).

As in \cite[(1.2)]{KK90} we fix an integral lattice $\Crt_{\Z}\subset \Crt$ satisfying
\begin{itemize}
\item $h_i\in \Crt_{\Z}$ for all $1\le i\le l$,
\item $\Crt_{\Z}/\sum_{i=1}^l \Z h_i$ is torsion free, and
\item $\Crt_{\Z}^\ast:=\mathrm{Hom}_{\Z}(\Crt_{\Z},\Z)$ contains $\Pi$.
\end{itemize}

We call $\Crt_{\Z}^\ast$ the weight lattice. Clearly, it is $W$-stable. We choose fundamental weights $\om_i \in \Crt_{\Z}^\ast$, $1\le i\le l$ satisfying $\om_i(h_j)=\de_{i,j}$ for all $1\le i,j\le l$. Note that if $\mathrm{rk} A=l$, then the $\om_i$'s are uniquely determined.

\subsection{Localization}
Following  \cite[(1.3)]{KK90} let $G$ be a Kac-Moody group, let $P_\Th$ be a parabolic subgroup containing the Borel subgroup $B$ and the compact maximal torus $T$, 
and let $G/P_\Th$ be the associated Kac-Moody flag variety. We now recall computations for both the $T$-equivariant Chow group and the equivariant $K$-theory of  $G/P_\Th$ with coefficients in a commutative ring $R$. 

Set $\La$ to be the character group of $T$. Note that $\Phi \subset \La \subset \Crt_{\Z}^\ast$. Assume that $\La$ is a formal Demazure lattice in the sense of \cite[Def.3.1]{Le}, i.e., every simple root of $\Pi$ can be extended to a $\Z$-basis of $\La$. We refer to \cite[\S3]{Le} for properties and examples of Demazure lattices.

Set $S=\Z[\La]$. Consider the left $S$-module $S_W=S\otimes_R R[W]$. Each element of $S_W$ can be written as an $S$-linear combination $\sum_{w\in W} q_w\de_w$, where $\{\de_w\}_{w\in W}$ is the standard basis and $q_w\in S$ are coefficients. The twisted commuting relation $w(q)\de_w =\de_w q$,  $q\in S$ induces a multiplication on $S_W$, hence, turning it into the twisted group algebra of \cite[(2.1)]{KK90}. 

Consider the localizations $Q=S[\tfrac{1}{x_\al},\al\in \Phi_+]$ and  $Q_W=Q \otimes_R R[W]$. Let $Y$ be the $R$-subalgebra of $Q_W$ generated by elements of $S\subset Q_W$ and by the push-pull elements 
\[
y_i=\tfrac{1}{x_{-{\al_i}}}+\tfrac{1}{x_{\al_i}}\de_{s_{i}}\in Q,
\] 
for all simple roots $\al_i\in \Pi$ and the corresponding  reflections $s_i$. Given $w\in W$ and its reduced word $w=s_{i_1}s_{i_2}\ldots s_{i_m}$ we set $y_w=y_{i_1}\ldots y_{i_m}$ in $Q_W$ (see \cite[(2.4)]{KK90}). According to \cite[(2.9)]{KK90} the elements $\{y_w\}_w$ form an $S$-basis of the algebra $Y$.

Let $\Psi$ be the $S$-linear dual of $Y$ and let $\om$ denote the $Q$-linear dual of $Q_W$.  Then $\Psi$ is an $S$-subalgebra of $\om$ (see \cite[Prop.2.20]{KK90}) which can be identified with the $T$-equivariant $K$-theory $K_T(G/B)$ by \cite[Thm.3.13]{KK90}. Moreover, the parabolic analogue of this result \cite[Cor.3.20]{KK90} says that the invariant subring $\Psi^{\Th}$ (under the Hecke action by $W_\Th$) can be identified with $K_T(G/P_\Th)$.

\subsection{The forgetful map}
We now explain the construction of the forgetful map in the context of localization. We assume for simplicity $\Th=\emptyset$.

Consider the $Q_W$ action on $Q$ defined by
\[
(\sum_w q_w\de_w) \cdot q'=\sum_w q_w w(q').
\] 
Using this action one identifies the algebra $Y$ with its image in $\mathrm{End}_R(S)$. Then composing with the augmentation $\ep\colon S\to R$ one obtains the map $Y \to \Hom_R(S,R)$
\[
d=\sum_w q_w\de_w =\{s\mapsto \sum_w q_ww(s)\}\quad\mapsto\quad \ep d=\{s\mapsto \ep(\sum_w q_ww(s))\}.
\]
The image of this map is denoted $\ep Y$. 

Consider $g\in \Psi$. It can be viewed as an element of $\Hom_S(Y,S)$ as follows:  if we write $g=(s_w)_w$, then $g\colon d=\sum_w q_w\de_w\mapsto \sum_w q_ws_w\in S$. 

We define the forgetful map $\rho\colon \Psi \to \ep \Psi=\Hom_R(\ep Y,R)$  by 
\[
g\mapsto \{\ep d \mapsto \ep g(d)\},
\] 
where $\{\ep d \mapsto \ep g(d)\} \in \ep \Psi$ sends
\[
\{s\mapsto \ep(\sum_w q_w w(s))\} \mapsto \ep(\sum_w q_ws_w).
\]

Let $y_{w}^\ast$ be the $Q$-linear dual of $y_{w}$. The set $\{y_{w}^\ast\}_w$ forms an $S$-basis of $\Psi$. By definition the forgetful map $\rho$ sends $y_{I_w}^*$ to  $\ep y_{v} \mapsto \ep (y_{w}^\ast(y_{v}))=\de_{w,v}$ which is the $R$-dual of $\ep y_{w}$. Hence, it maps 
\[
x=\sum_w s_w y_{w}^\ast \mapsto \rho(x)=\sum_w \ep(s_w) \ep y_{w}^\ast,
\] 
where $\{\ep y_{w}^\ast\}_w$ is the $R$-basis of $\ep\Psi$.

Therefore, we can identify $\ep\Psi$ with the quotient $\Psi/\mathcal{I}\Psi=\Z\otimes_S \Psi$ so that $\rho$ turns into the quotient map by \cite[Thm.3.28]{KK90}
\[
\rho\colon  K_T(G/B)\ra K(G/B).
\]
As before, this description can be extended to the parabolic situation in which case we obtain the forgetful map
\[
\rho\colon  K_T(G/P_\Theta)\ra K(G/P_\Theta).
\]

\subsection{The equivariant Riemann-Roch type formula}

Following to \cite{HHH} we may identify the invariant subring $\Psi^{\Th}$ and, hence, the $T$-equivariant $K$-theory $K_T(G/P_\Theta)$, with the structure algebra $\SA_m(\Gg)$ of the corresponding (parabolic) moment graph $\Gg=\Gg(W^\Th)$, where $W^\Th$ is the subset of minimal left coset representatives of $W/W_\Theta$ as in Example~\ref{ex:Bruhat}.
Observe that the similar identification between the structure algebra $\SA_a(\Gg)$ and the $T$-equivariant Chow ring $CH_T(G/P_\Theta)$ is also well-known.

Modulo all these identifications we obtain the following consequences of Proposition~\ref{thm:main} and Lemma~\ref{lem:forgt}:

\begin{cor}
The localized Chern character $\cch_i$ on the structure algebras of $\Gg(W^\Th)$ defines the the respective localized Chern character
\[
\cch_i\colon K_T(G/P_\Th) \to CH_T^{\le i}(G/P_\Th;\Q).
\]

Moreover, it restricts to the  the usual (non equivariant) Chern character 
\[
\tilde{\cch}_i\colon K(G/P_\Th) \to CH^{\le i}(G/P_\Th;\Q)
\]
so that  there is a commutative diagram
\[
\xymatrix{
K_T(G/P_\Th) \ar[d]_{\cch_i} \ar[r]^\rho & K(G/P_\Th)  \ar[d]^{\tilde{\cch}_i} \\
CH_T^{\le i}(G/P_\Th;\Q) \ar[r]^\rho  & CH^{\le i}(G/P_\Th;\Q)  
}.
\]
\end{cor}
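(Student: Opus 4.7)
The plan is to translate the statement into one about structure algebras and then invoke the abstract results of Section~\ref{sec:chern}. Setting $\Gg = \Gg(W^\Th)$, the identifications $K_T(G/P_\Th) \simeq \Psi^\Th \simeq \SA_m(\Gg)$ coming from \cite{KK90, HHH} and the analogous identification $CH_T^*(G/P_\Th) \simeq \SA_a(\Gg)$ reduce the first claim to a direct application of Proposition~\ref{thm:main}, which produces the required ring homomorphism $\cch_i\colon \SA_m(\Gg) \to \SA_a^{\le i}(\Gg)_\Q$ forming the top row of the diagram.

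For the bottom row and for commutativity of the square, I would invoke Lemma~\ref{lem:forgt}, which asserts that the moment-graph Chern character commutes with the quotient map $\rho\colon \SA(\Gg) \to \widetilde{\SA}(\Gg)$ modulo the augmentation ideal. I would then identify $K(G/P_\Th)$ with $\widetilde{\SA}_m(\Gg) = \SA_m(\Gg)/I_m \SA_m(\Gg)$ using the description of the forgetful map already given in this section, which presents $\rho\colon K_T(G/P_\Th) \to K(G/P_\Th)$ precisely as this quotient (following \cite[Thm.3.28]{KK90}), and similarly identify $CH^*(G/P_\Th;\Q) \simeq \widetilde{\SA}_a(\Gg)_\Q$ via the standard non-equivariant specialization of \cite{KK86}. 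The restricted map $\widetilde{\cch}_i$ supplied by Lemma~\ref{lem:forgt} then automatically makes the square commute.

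The main obstacle will be verifying that under these combinatorial identifications $\widetilde{\cch}_i$ coincides with the classical topological Chern character on $K(G/P_\Th)$. The cleanest route is to check agreement on classes of $T$-equivariant line bundles $[\mathcal{L}(\la)]$ with $\la \in \La$, on which both the moment-graph character and the classical Chern character evaluate as the truncated exponential $\sum_{j \le i} \tfrac{1}{j!}\, c_1(\mathcal{L}(\la))^j$; since these classes together with the $S$-action generate $K_T(G/P_\Th)$, the agreement descends to $K(G/P_\Th)$. In the Kac-Moody setting the subtle point is that $G/P_\Th$ is infinite-dimensional, so $K$ and $CH^{\le i}$ must be interpreted via the ind-scheme structure; the argument is therefore most conveniently carried out on the Schubert-type $S$-basis $\{y_w^\ast\}_{w \in W^\Th}$ of $\SA_m(\Gg)$, where one can track the exponential formula through the localization computations of \cite{KK90} rather than appealing directly to the geometry of line bundles.
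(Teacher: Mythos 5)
Your proposal matches the paper's approach: the corollary is stated immediately after the sentence ``Modulo all these identifications we obtain the following consequences of Proposition~\ref{thm:main} and Lemma~\ref{lem:forgt},'' and no further proof is given, so the intended argument is exactly the reduction to those two results via the identifications $K_T(G/P_\Th)\simeq\SA_m(\Gg)$, $CH_T(G/P_\Th)\simeq\SA_a(\Gg)$ and $K(G/P_\Th)\simeq\widetilde\SA_m(\Gg)$ you spell out. Your last paragraph (checking that $\widetilde{\cch}_i$ really is the classical Chern character by evaluating on line bundle classes and working with the $S$-basis $\{y_w^\ast\}$) supplies a genuine verification that the paper passes over silently, so you are being somewhat more careful than the source; this is a useful addition rather than a departure.
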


We set $\xi$ to act by elements of $W$ as in Example~\ref{ex:xicharmap}.
Then by Example~\ref{ex:fibK} there is the $\xi$-fibration $\Gg(W) \to \Gg(W^\Theta)$ which is regular by \cite[Lemma~2.2]{CZZ}, since $\La$ is the Demazure lattice. 
We have $-\xi_y(\be)=-y(\beta)=vu(-\beta)$, where $y=vu$, $v\in W^\Theta$ and $u\in W_\Theta$. So
\[
td_i^\xi(\Gg)=\big(\exp\big(\sum_{\be\in N_y^\xi}-\xi_y(\be)\big)\big)_{\! y}=\big(\exp v\big(\sum_{\beta\in \Phi_+^\Theta\cap u(\Phi_-^\Theta)}\beta \big)\big)_{\! vu} \in CH^{\le i}_T(G/P_\Th;\Q)  
\]
and we obtain the following consequence of Theorem~\ref{thm:mainRR}:

\begin{cor}
For any $z\in K_T(G/P_\Theta)$ we have
\[
\pi^\xi_\ast \big(\cch_i(z)\cdot td_i^\xi(\Gg)\big)=\cch_i(\pi^\xi_\ast(z)).
\] 
\end{cor}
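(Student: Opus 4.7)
The plan is to deduce this corollary as a direct transcription of Theorem~\ref{thm:mainRR} through the chain of identifications set up in Section~\ref{sec:KacM}. First I would invoke the Kostant--Kumar isomorphisms, together with the Goresky--Kottwitz--MacPherson-type presentations of \cite{HHH}, to identify $K_T(G/B)\cong \SA_m(\Gg(W))$, $K_T(G/P_\Theta)\cong \SA_m(\Gg(W^\Theta))$, and similarly on the additive side $CH_T(G/B)\cong \SA_a(\Gg(W))$, $CH_T(G/P_\Theta)\cong \SA_a(\Gg(W^\Theta))$. Under these identifications the previous corollary in the section already identifies $\cch_i$ on $K_T(G/P_\Theta)$ with the moment graph Chern character $\cch_i\colon \SA_m(\Gg(W^\Theta))\to \SA_a^{\le i}(\Gg(W^\Theta))_\Q$.

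Next I would verify that the push-forward $\pi^\xi_\ast$ appearing in the corollary is precisely the one supplied by Proposition~\ref{prop:push} for the fibration $\pi\colon \Gg(W)\to \Gg(W^\Theta)$ of Example~\ref{ex:fibbr}, twisted by the monodromy $\xi_y(\la)=y(\la)$ of Example~\ref{ex:xicharmap}. The fibration hypothesis is granted by Example~\ref{ex:fibbr}, and the fact that $\xi$ is compatible with the fibre $\Gg_{[e]}$ is exactly the content of Example~\ref{ex:gcomf}. Regularity in the sense of Definition~\ref{dfn:regf} follows because, on the one hand, $L_{y,\sim}=y(\Phi_+^\Theta)$ depends only on the coset $[y]=yW_\Theta$, and on the other hand $\La$ being a Demazure lattice implies by \cite[Lemma~2.2]{CZZ} that the elements $x_\beta=1-e^{-\beta}$ for $\beta\in L_{[v]}$ are irreducible and pairwise coprime in $\Z[\La]$. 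Under the identifications above, $\pi^\xi_\ast$ then agrees with the classical $K$-theoretic pushforward along $G/B\to G/P_\Theta$ by Example~\ref{ex:fibK}.

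With all this in place, applying Theorem~\ref{thm:mainRR} to $\Gg=\Gg(W)$, $\Gg_\sim=\Gg(W^\Theta)$ yields the formula verbatim in the structure algebras, hence in $K_T$ and $CH_T$. The only item that requires checking by hand is that the moment graph Todd genus $td_i^\xi(\Gg)$, computed componentwise as $\exp\bigl(\sum_{\be\in N_y^\xi}-\xi_y(\be)\bigr)$, really equals the expression $\bigl(\exp v(\sum_{\beta\in \Phi_+^\Theta\cap u(\Phi_-^\Theta)}\beta)\bigr)_{vu}$ displayed just before the corollary; but this is an immediate unwinding using the formulas $N_y^\xi=\{\beta\in\Phi_+^\Theta\mid u(\beta)\in \Phi_-^\Theta\}$ and $\xi_y(\beta)=y(\beta)=vu(\beta)$ from Example~\ref{ex:gcomf}, together with $vu(-\beta)=v(-u(\beta))=v(\beta')$ as $\beta$ ranges over $N_y^\xi$ and $\beta'=-u(\beta)$ ranges over $\Phi_+^\Theta\cap u(\Phi_-^\Theta)$.

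The only nontrivial step is the last bookkeeping, namely checking that the sign conventions and the change of summation variable $\beta\leftrightarrow -u(\beta)$ yield exactly the expression of the Todd genus quoted in Section~\ref{sec:KacM}; everything else is formal after the dictionary of Section~\ref{sec:KacM} has been installed. Once those signs are confirmed, the corollary follows.
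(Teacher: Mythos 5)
Your argument tracks the paper's own route step for step: set up the Kostant--Kumar/HHH identifications of $K_T$ and $CH_T$ with the structure algebras, verify regularity of the $\xi$-fibration $\Gg(W)\to\Gg(W^\Theta)$ using the Demazure-lattice hypothesis and \cite[Lemma~2.2]{CZZ}, unwind $td_i^\xi(\Gg)$ componentwise via Example~\ref{ex:gcomf}, and apply Theorem~\ref{thm:mainRR}. One cosmetic slip: by Example~\ref{ex:gcomf} one has $L_{y,\sim}=v(\Phi_+^\Theta)$ for $v=\overline{y}$ the minimal coset representative of $[y]$, not $y(\Phi_+^\Theta)$ as you write (the latter is not constant on the coset $yW_\Theta$, which contradicts your very next clause), but this does not affect the validity of the regularity check or anything downstream.
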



\bibliographystyle{alpha}

\end{document}